\title{Multiples of integral points on elliptic curves}
\author{Patrick Ingram}
\address{Department of Mathematics, University of Waterloo}
\email{pingram@math.uwaterloo.ca}
\date{revised August 2008}
\thanks{This research was supported in part by a postdoctoral fellowship from NSERC of Canada.}
\newcommand{\QQ}{\mathbb{Q}}
\newcommand{\ZZ}{\mathbb{Z}}
\newcommand{\CC}{\mathbb{C}}
\newcommand{\RR}{\mathbb{R}}
\newcommand{\Ocal}{\mathcal{O}}
\newcommand{\h}{\hat{h}}
\renewcommand{\Re}{\mathrm{Re}}
\renewcommand{\Im}{\mathrm{Im}}
\newcommand{\ord}{\operatorname{ord}}
\newcommand{\Res}{\operatorname{Res}}
\newcommand{\MOD}[1]{~(\textup{mod}~#1)}  
\newcommand{\Lang}{C_\lambda}
\newtheorem{theorem}{Theorem}
\newtheorem{proposition}[theorem]{Proposition}
\newtheorem{claim}[theorem]{Claim}
\newtheorem{lemma}[theorem]{Lemma}
\newtheorem*{corollary}{Corollary}
\theoremstyle{definition}
\newtheorem*{remark}{Remark}
\begin{document}

\begin{abstract}
If $E$ is a minimal elliptic curve defined over $\ZZ$, we obtain a bound $C$, depending only on the global Tamagawa number of $E$,  such that for any point $P\in E(\QQ)$, $nP$ is integral for at most one value of $n>C$.  As a corollary, we show that if $E/\QQ$ is a fixed elliptic curve, then for all twists $E'$ of $E$ of sufficient height, and all torsion-free, rank-one subgroups $\Gamma\subseteq E'(\QQ)$, $\Gamma$ contains at most $6$ integral points.  Explicit computations for congruent number curves are included.
\end{abstract}

\maketitle

\section*{Introduction}

When considering the subject of integral points on elliptic curves, it seems natural to ask which multiples of a non-torsion point may be integral.  
 Let $E/\QQ$ be an elliptic curve and $P\in E(\QQ)$ be a point of infinite order. If $P$ is not integral, then one can easily show that it has no integral multiples.  A theorem of Siegel shows that $E(\QQ)$ contains at most finitely many integral points, and so it is certainly true that $P$ has at most finitely many integral multiples.  While it is possible to construct points on elliptic curves with arbitrarily many integral multiples, these constructions are somewhat artificial, and may be avoided if one considers only minimal curves.  With this restriction, it seems likely that the number of integral multiples of $P$ is bounded uniformly.  Certainly, if one assumes the $abc$ Conjecture of Masser and Oesterl\'{e}, then it follows from work of  Hindry and Silverman \cite{hindrysilv} that a uniform bound exists.  If one restricts attention to  curves with integral $j$-invariant --- or more generally, curves with at most a fixed number of primes dividing the denominator of the $j$-invariant ---  work of Silverman \cite{silvsieg} provides the same result unconditionally. 

With respect to bounding the \emph{size} of the largest $n$ such that $nP$ is integral, much less appears to be known.  For a given point on a given curve, the techniques arising from the study of linear forms in elliptic logarithms give an effective method for bounding $n$, but the bound depends strongly on the curve and, indeed, the point $P$.  
  By a careful consideration of division polynomials of elliptic curves, we are able to make the dependence on $P$ and $E$ more explicit.  Note that the idea of using the sequence of division polynomials on $E$ to say something about the integrality of multiples of a point is not new; specifically, the reader is directed to the work of Ayad \cite{ayad}.  In the present case, however, we are able to   
  bound $n$ such that $nP$ is integral in terms of the height of $E$, and another quantity, $M(P)$, related to the Tamagawa number of $E$.  For any elliptic curve $E/\QQ$, and each prime $p$, the connected component $E_0(\QQ_p)\subseteq E(\QQ_p)$ is a subgroup of finite index \cite[p.~385]{silverman}, this index being 1 at all primes of good reduction.  For $P\in E(\QQ)$, let $r(P, p)$ denote the order of $P$ in the quotient group $E(\QQ_p)/E_0(\QQ_p)$.  We will set
  $$M(P)=\operatorname{lcm}\{r(P, p)\},$$
as $p$ varies over all primes.  When $P$ is fixed, we will simply refer to $M$.

 Although 
the bound on the largest $n$ such that $nP$ is integral depends on the height of the curve, it affords us, perhaps surprisingly, a bound on the \emph{second} largest such value which depends only on $M$.  In an argument not dissimilar to that behind the proof of Thue's Theorem on diophantine approximation, we assume the existence of a very large $n$ such that $nP$ is integral, and then bound all other such $n$.  The bound obtained for `all but one' of the positive integers $n$ such that $nP$ is integral is independent of the point $P$ and the curve $E$, and can be presented entirely explicitly in terms of the quantity $M$.

\begin{theorem}\label{th:main}
There is an absolute constant $C$ such that for all minimal elliptic curves $E/\QQ$, and non-torsion points $P\in E(\QQ)$, there is at most one value of $n>CM(P)^{16}$ such that $nP$ is integral.  Furthermore, this one value is prime.
\end{theorem}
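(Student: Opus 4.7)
The natural framework is the elliptic divisibility sequence $(B_n)$ attached to $P$: write $x(nP) = A_n/B_n^2$ in lowest terms with $B_n > 0$. Since $E$ is minimal, $nP$ is integral precisely when $B_n = 1$, and $(B_n)$ is a strong divisibility sequence, so $d \mid n$ implies $B_d \mid B_n$; up to a well-understood unit, $B_n$ is the denominator of $\psi_n(P)$, so the entire problem is controlled by the arithmetic of the division polynomials. As a first step, I would use the N\'eron decomposition $n^2 \hat h(P) = \sum_v \hat\lambda_v(nP)$ and express each local height in terms of $v(\psi_n(P))$ up to model-dependent error terms; this yields an inequality
\[
2\log|B_n| \;\ge\; n^2 \hat h(P) \;-\; 2\hat\lambda_\infty(nP) \;-\; \sum_{p\mid\Delta_E}\varepsilon_p(n,P),
\]
in which the archimedean term depends on $H(E)$ and each $\varepsilon_p$ is controlled by $r(P,p)$. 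Any single $n$ with $nP$ integral then satisfies a bound in terms of $H(E)$ and $M(P)$.

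The core of the argument is the Thue-style bootstrap described in the introduction. Assume the existence of one very large $n^*$ with $n^*P$ integral: substituting into the inequality above forces $\hat h(P)$ to be small relative to $H(E)$, effectively using up the $H(E)$-dependence. Now for any other $n$ with $nP$ integral, I would apply a division polynomial identity such as
\[
\psi_{n^*+n}\,\psi_{n^*-n} \;=\; \psi_{n^*-1}\,\psi_{n^*+1}\,\psi_n^2 \;-\; \psi_{n-1}\,\psi_{n+1}\,\psi_{n^*}^2,
\]
evaluated at $P$. Integrality of both $n^*P$ and $nP$ forces cancellations on both sides, and comparing archimedean and non-archimedean valuations, now measured against the small $\hat h(P)$ rather than $H(E)$, should yield a bound of the form $n \le C'M(P)^8$. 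The careful accounting of local heights at primes of bad reduction, where $r(P,p)$ enters, is what produces the exponent.

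Given such a uniqueness statement for $n > C'M(P)^8$, the primality clause follows from a divisibility argument. If the unique $n^* > CM(P)^{16}$ were composite, write $n^* = ab$ with $1 < a \le b$; then $B_a, B_b$ both divide $B_{n^*} = 1$, so $aP$ and $bP$ are both integral, and $b \ge \sqrt{n^*} > \sqrt{C}\, M(P)^8 = C'M(P)^8$ upon taking $C = (C')^2$. This exhibits a second integral multiple beyond the sharper threshold, contradicting uniqueness; hence $n^*$ must be prime.

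The principal obstacle is the second paragraph: executing the Thue-style bootstrap so that the bound on $n$ is genuinely independent of $H(E)$ and the power of $M(P)$ is as low as $8$. The delicate tracking of local heights at primes of bad reduction, where $r(P,p)$ enters, is where this power arises, and extracting it sharply will be the main technical work.
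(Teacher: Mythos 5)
There is a genuine gap, and it sits exactly where you locate your ``principal obstacle.'' The first difficulty is already in your opening paragraph: the N\'eron decomposition applied to an integral point $nP$ does \emph{not} bound $n$ in terms of $h(E)$ and $M(P)$. For an integral point one has $\hat\lambda_\infty(nP)\approx\tfrac12\log|x_{nP}|\approx n^2\hat h(P)$, so your displayed inequality collapses to a tautology consistent with arbitrarily large $n$; an effective bound on a single $n$ is essentially an effective Siegel theorem, and the paper obtains it only by invoking David's explicit lower bounds for linear forms in elliptic logarithms (giving $n\ll_M h(E)^{5/2}$). Your proposal contains no transcendence input, and the division-polynomial identity in your second paragraph cannot substitute for it: the paper's ``Thue-style'' step is an \emph{archimedean} determinant argument --- two integral multiples give two tiny linear forms $n_iz+m_i\omega$, the determinant $n_2m_1-n_1m_2$ is a nonzero integer, and comparing with $\omega$ yields $n_1^2h(E)\ll_M\log n_2$. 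Combined with the linear-forms bound on $n_2$ this bounds $h(E)$ absolutely; the finitely many curves of bounded height are then handled by direct computation. It does not produce a clean ``$n\le C'M^8$ on every curve'' as you assert, and nothing in your sketch indicates how comparing valuations in the recurrence would.

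The logical order of the primality clause is also inverted relative to what is actually needed. The paper proves primality \emph{first} and independently: if $n=qa$ is composite with $q\le\sqrt{n}$ prime, then $q(aP)$ is integral, so a product-formula estimate on $\psi_q$ over $q$-torsion, together with the resultant bound $\ord_p\gcd(\phi_n(P),\psi_n^2(P))\ll n^2M^2\ord_p\Delta$ (Cheon--Hahn) and the Hindry--Silverman lower bound $\hat h(P)\ge C_\lambda h(E)$ with $C_\lambda^{-1}=O(M^6)$, forces $a=O(M^8)$ and hence $n\le a^2=O(M^{16})$. This primality is then \emph{used} in the gap principle to show $n_2m_1-n_1m_2\ne 0$. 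Your derivation of primality from the uniqueness statement (via $B_a\mid B_{n^*}$, which is a correct and pleasant observation) is fine as far as it goes, but it presupposes the uniqueness you have not established, whereas the paper needs primality as an input to establish uniqueness. Finally, the exponent $16$ comes predominantly from the $M^6$ in the Lang-type lower bound times the $M^2$ from the resultant analysis, squared --- not from the local-height bookkeeping alone.
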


Note that if one restricts attention to elliptic curves with $j(E)\in\mathbb{Z}$, for instance, one always has $M(P)\leq 12$, and so the bound in the theorem is absolute.  Similarly, all curves in a family of quadratic twists will have the same $j$-invariant, and so there is a $C'=C'(j)$ such that $nP$ is integral for at most one value of $n>C'$ (independent of $M(P)$).  In fact, one can do much better. 
 Applying work of the author and Silverman \cite{meandjoe} one may, for each $n\leq C'(j)$, effectively find all examples of points $P$ on twists in our family for which $nP$ is integral.
  Thus one may take the constant $C'$ to be  2, or in some cases 1, modulo a finite, effectively computable set of exceptions.
  We obtain a particularly explicit result for the family of congruent number curves.

\begin{theorem}\label{th:cong}
Let $N$ be a square-free integer,  let $$E_N:y^2=x^3-N^2x,$$ and let $P\in E_N(\QQ)$ be a non-torsion integral point.  Then there is at most one value of $n>1$ such that $nP$ is integral.
\end{theorem}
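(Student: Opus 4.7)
The plan is to combine Theorem~\ref{th:main} with the effective uniformity results of \cite{meandjoe}. The overall strategy is: bound $M(P)$ absolutely across the family $\{E_N\}$, apply Theorem~\ref{th:main} to reduce to finitely many values of $n$, and then verify each remaining $n$ directly.

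First, since $j(E_N) = 1728$ is independent of $N$, each $E_N$ has potential good reduction at every prime. A short run through Tate's algorithm on the model $y^2 = x^3 - N^2 x$ confirms that the component groups at bad primes have bounded order: at an odd prime $p \mid N$ one has $v_p(c_4) = 2$ and $v_p(\Delta) = 6$, so the Kodaira type is $I_0^*$, and since the full $2$-torsion is already $\QQ$-rational the component group is $(\ZZ/2\ZZ)^2$; at $p = 2$ a similar case analysis (splitting on the parity of $N$) again bounds the component group by a small constant. Hence $M(P) \leq M_0$ for some small absolute $M_0$, uniformly in $N$ and $P$.

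Applying Theorem~\ref{th:main} then yields an absolute constant $B = C M_0^{16}$ such that at most one $n > B$ has $nP$ integral. It remains to show that for each $2 \le n \le B$ no non-torsion integral point $P \in E_N(\QQ)$ admits $nP$ integral together with a further integral multiple. For each such $n$, the result of \cite{meandjoe} effectively enumerates the pairs $(E_N, P)$ in our family for which $P$ and $nP$ are both integral, and a finite check rules out the existence of a third integral multiple in each case.

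The main obstacle is the size of $B$: although $M_0$ is small, $C M_0^{16}$ is astronomical, so a literal case-by-case verification over all $n \le B$ is intractable without additional input. I expect the actual argument either (i) sharpens the exponent in Theorem~\ref{th:main} by exploiting the CM by $\ZZ[i]$, so that multiplication-by-$n$ factors through $\ZZ[i]$ and tighter $p$-adic estimates on the division polynomials $\psi_n$ apply, or (ii) invokes a more uniform version of the \cite{meandjoe} enumeration that handles many $n$ simultaneously. In either case the endgame rests on the explicit arithmetic of integral points on $E_N$: the factorization $y^2 = x(x-N)(x+N)$ with $N$ squarefree forces the prime factorization of $x(P)$ to be highly constrained, and combined with the denominator control from the doubling formula $x(2P) = (x^2 + N^2)^2/(4y^2)$ and its higher analogues, this should close the finitely many residual cases.
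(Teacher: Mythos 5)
Your high-level reading is right as far as it goes: $j(E_N)=1728$ for every $N$, the reduction at odd $p\mid N$ is of type $I_0^*$ with component group of order at most $4$, the prime $2$ contributes a bounded factor, so $M(P)$ is absolutely bounded over the family and Theorem~\ref{th:main} yields an absolute constant $B$ beyond which at most one multiple is integral. But the proposal does not close, and the gap is exactly where you flag it. The step ``for each $2\le n\le B$, enumerate via \cite{meandjoe} and check'' is the intractable part: the constant in Theorem~\ref{th:main} is inherited from David's lower bound for linear forms in elliptic logarithms (the $4\times 10^{41}$ of Lemma~\ref{lem:david}) together with the uniform bound on $h(E)$ extracted at the end of its proof, so $B$ is astronomical, and each individual $n$ requires resolving a Thue--Mahler equation. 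Ending with ``a finite check rules out the existence of a third integral multiple in each case'' without performing, or even showing the feasibility of, that check is not a proof. Note also that Theorem~\ref{th:mandj} only covers $n\ge 3$, so the case $n=2$ needs a separate argument; the $2$-adic computation with $x_{2P}=(x_P^2+N^2)^2/(4y_P^2)$ that you gesture at is indeed Lemma~\ref{lem:timestwo}, but it has to be carried out, not merely invoked.

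The paper's route avoids the enumeration over $n$ entirely. It re-proves every estimate of Sections~\ref{sec:eds}--\ref{sec:gap} with explicit constants tailored to $E_N$: sharp height-difference and Lang-type bounds from \cite{bst} in place of the generic ones, the explicit bound $|h_n|\le (2N)^{(n^2-1)/2}$ on division-polynomial values, the fact (from \cite{me:eds} together with Lemma~\ref{lem:timestwo}) that $nP$ integral forces $n$ coprime to $2\cdot 3\cdot 5\cdot 7$ and hence $n\ge 11$, and a primality statement valid already for $n\ge 2$. The decisive move is the gap principle (Claim~\ref{cl:gap}): if $n_1P$ and $n_2P$ are both integral with $2\le n_1<n_2$, then $\log n_2\ge \frac{n_1^2-4}{8}\log N+\log(\omega_1/2)$, which, played against the linear-forms upper bound $n_2\ll(\log N)^{5/2}$ and $n_1\ge 11$, forces $N\le 75$. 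The terminal finite verification is therefore over the few dozen curves $E_N$ with $N\le 75$ --- listing their integral points --- rather than over all $n\le B$, and that is what makes it feasible. Your instinct (i), that the quantitative content of the general argument must be sharpened for this family, is the correct diagnosis, but the sharpening lives in the height estimates and the gap principle, not in the CM structure or a $\ZZ[i]$-factorization of multiplication by $n$.
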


  It is the size of this bound, not its existence, that is novel.  Gross and Silverman \cite{grosssilv} derived an explicit version of the result of Silverman \cite{silvsieg} mentioned above, which bounds, as a special case, the number of integral points on any rank-one elliptic curve  $E/\QQ$ by $3.3\times 10^{33}$, provided $j(E)\in\ZZ$.  It is worth noting, as well, that in light of Lemma~\ref{lem:timestwo} below, Theorem~1 of \cite{me:eds} implies that $P\in E_N(\QQ)$ has no integral multiples (other than $\pm P$) if $x_P<0$, or if $x_P$ is a square. 

Throughout, we will assume that $E:y^2=x^3+Ax+B$ is an elliptic curve in short Weierstrass form, with integral coefficients.  For such a curve, we define the (logarithmic) height to be
$$h(E)=\max\{h(j(E)), \log\max\{4|A|, 4|B|\}\}\geq 2\log 2,$$
where $j(E)=1728(4A^3)/(4A^3+27B^2)$ is the usual $j$-invariant and $h(p/q)=\log\max\{|p|, |q|\}$ is the  usual logarithmic height on $\QQ$.
  We define the canonical height of a point $P\in E(\QQ)$ to be
$$\h(P)=\frac{1}{2}\lim_{n\rightarrow\infty}\frac{h(x_{2^nP})}{4^n}$$
as in \cite{silverman}.  This differs from the height used in \cite{david} by a factor of 2, and while this is immaterial for the general result, we mention this for the benefit of
the reader wishing to recreate the explicit calculations in the later sections.
We will say that $E$ is quasi-minimal if $\Delta(E)$ is minimal within the $\QQ$-isomorphism class of $E$, subject to the constraint that $E$ have the form above.  Such curves may not be minimal, in the usual sense, at 2 or~3, but the extent of their non-minimality is bounded.

The paper proceeds as follows: In Section~\ref{sec:eds} we show that there is a uniform constant $C$ such that if $E/\QQ$ is quasi-minimal and $nP$ is integral, for some $P\in E(\QQ)$, then $n\leq CM^{16}$ or $n$ is prime.  
We show, in Section~\ref{sec:linforms}, that any integer $n$ such that $nP$ is integral  satisfies $n\ll h(E)^{5/2}$, where the implied constant depends only on $M$.   In Section~\ref{sec:gap} we complete the proof of Theorem~\ref{th:main} by constructing a function $f(x,y)$ such that if $n_1P$ and $n_2P$ are both integral, and $n_1, n_2$ are `large', then $f(n_1, n_2)$ is `small'.  Using the result of Section~\ref{sec:eds} as a non-vanishing result, we construct an elementary lower bound on $f(n_1, n_2)$, and a contradiction ensues.
  Note that as there are only finitely many curves below any given height, we may  (effectively) find all integral points on curves with height below the bound, and thereby check that the theorem holds.    
    We do not take great pains to keep explicit track of the constants that arise, only the dependence on $M$.  One obtains uniform bounds in contexts where $M$ is uniformly bounded, but in these special cases it is best to optimize the entire proof for the setting at hand, as in Section~\ref{sec:examples}, where we prove Theorem~\ref{th:cong}.

Although we consider only the problem of integral points on curves, it turns out that these methods allow one to to prove (weaker) results about points satisfying weaker diophantine constraints.  This is explored further in~\cite{me_quant}.


\section{Elliptic divisibility sequences and division polynomials}\label{sec:eds}

Throughout this paper, we make use of ideas from the study of elliptic divisibility sequences.  If $E/\QQ$ is an elliptic curve, and $P\in E(\QQ)$ is any non-torsion point, we may write
$$x_{nP}=\frac{A_n}{D_n^2}$$
in lowest terms taking, without loss of generality, $D_n>0$.  The problem of finding all $k$ such that $kP$ is integral is, of course, the same as describing all $k$  such that $D_k=1$.  This is a weaker version of the problem of determining which terms in the sequence $(D_n)_{n\in\ZZ}$ fail to have primitive divisors (i.e., prime divisors not dividing earlier terms in the sequence), and so we may apply the results of \cite{me:eds, meandjoe} to the problem at hand.  Although we are interested in the sequence $(D_n)_{n\in\ZZ}$, it benefits us to consider a related divisibility sequence.

Ward \cite{ward} examines sequences of integers $(h_n)_{n\in\ZZ}$ such that $h_0=0$, $h_1=1$, $h_2h_3\neq 0$, $h_2\mid h_4$, and such that
\begin{equation}\label{ellipticprop}
h_{m+n}h_{m-n}=h_{m-1}h_{m+1}h_n^2-h_{n-1}h_{n+1}h_m^2
\end{equation}
for all indices $m$ and $n$.
It is not entirely obvious that such a sequence is a divisibility sequence, that any initial values $h_2$, $h_3$, and $h_4$ define such a sequence, or that they do so uniquely, but the proofs of these three claims may be found in \cite{ward}.  

Supposing $E:y^2=x^3+Ax+B$ is an elliptic curve over $\QQ$, and $P=(a,b)\in E(\QQ)$ is a non-torsion point with integral coordinates, we may associate to $E$ and $P$ a Ward-type divisibility sequence $(h_n)_{n\in\ZZ}$ by setting $h_0=0$, $h_1=1$, $h_2=2b$, 
\begin{gather*}
h_3=3a^4+6Aa^2+12Ba-A^2,\\
\intertext{ and }
h_4=4b(a^6+5Aa^4+20Ba^3-5A^2a^2-4ABa-8B^2-A^3).
\end{gather*}
This matches precisely the definition of the division polynomials of $E$, and indeed one can verify that
$$h_n=\psi_n(P),$$
with $\psi_n$ defined as in \cite[p.~105]{silverman}, a fact that we will employ below.  We similarly define an auxiliary sequence $k_n=\phi_n(P)$, where
\begin{equation}\label{auxiliary}\phi_n=x\psi_n^2-\psi_{n+1}\psi_{n-1},\end{equation}
as in  \cite[p.~105]{silverman}.
We  then have
$$x_{nP}=\frac{k_n}{h_n^2}.$$
We do not, in general, have $D_{n}=|h_n|$, as it is perhaps true that $(h_n, k_n)\neq 1$, but we may control the extent of the cancellation in this fraction.

\begin{lemma}\label{twoeds}
Let $E/\QQ$ be an elliptic curve, let $P\in E(\QQ)$ be a point of infinite order, and let $h_n$, $D_n$, and $M$  be as  defined above.  Then for $n\geq 1$,
$$\log D_n\leq \log |h_n| \leq \log D_n+n^2M^2\log|\Delta(E)|.$$
\end{lemma}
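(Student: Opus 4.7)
The plan is to prove both inequalities by a prime-by-prime analysis of $v_p(h_n) - v_p(D_n)$, the extent of extra cancellation between $\phi_n(P)$ and $\psi_n(P)^2$ in the expression $x_{nP} = k_n/h_n^2$. For the first inequality, note that $x_{nP} = k_n/h_n^2 = A_n/D_n^2$ together with $\gcd(A_n,D_n)=1$ forces $A_n h_n^2 = k_n D_n^2$, and hence $D_n^2 \mid h_n^2$, i.e.\ $D_n \mid |h_n|$, giving $\log D_n \leq \log|h_n|$.

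For the second inequality, the target is a local estimate
$$v_p(h_n) - v_p(D_n) \;\leq\; n^2 M^2\, v_p(\Delta(E))$$
at each prime $p$, which, summed with weights $\log p$, delivers the bound. At $p \nmid \Delta(E)$ the right-hand side is zero, and standard formal-group theory for $E/\ZZ_p$ supplies the matching equality $v_p(\psi_n(P)) = v_p(D_n)$: if $m$ is the order of $\tilde P$ in $\tilde E(\mathbb{F}_p)$, both valuations vanish when $m \nmid n$ and are otherwise governed by the formal logarithm in parallel. At $p \mid \Delta(E)$ I would set $r = r(P,p)$, so that $r \mid M$ and $Q := rP \in E_0(\QQ_p)$, and then invoke the division-polynomial identity
$$\psi_{rm}(P) \;=\; \psi_r(P)^{m^2}\, \psi_m(rP),$$
an equality of rational functions on $E$ that one checks by comparing divisors (or by induction from the Ward recurrence (\ref{ellipticprop})). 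Since $Q$ lies in the connected component, the good-reduction analysis applies to $Q$ via the formal group of the N\'eron model at $p$, controlling $v_p(\psi_m(rP)) - v_p(D_{rm})$; the remaining discrepancy at $p$ thereby reduces to a bound on $m^2\, v_p(\psi_r(P))$. Indices $n$ not divisible by $r$ are handled by writing $n = rm + s$ with $0 \leq s < r$ and applying the Ward recurrence once more.

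The principal obstacle is the bad-prime analysis: one must establish a clean bound of the shape $v_p(\psi_r(P)) \leq r^2\, v_p(\Delta(E))$, so that after the substitution $n = rm$ and using $r \leq M$ one obtains $m^2\, v_p(\psi_r(P)) \leq n^2 M^2\, v_p(\Delta)$ at each bad prime. In the multiplicative case this can be read off from the Tate uniformization, which expresses $\psi_r(P)$ in terms of theta-type functions of the Tate parameter; in the additive case one works with the stratification of the N\'eron model. The factor $M^2$ in the final bound is traceable to the quadratic dependence on $m$ in the displayed $\psi$-identity, while $\log|\Delta(E)|$ arises from summing a bounded multiple of $v_p(\Delta)$ over the finite set of primes of bad reduction.
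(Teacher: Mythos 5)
Your first inequality is exactly the paper's argument and is fine. Your overall architecture for the second inequality --- localize at each prime, reduce to the order $r=r(P,p)\mid M$ of $P$ in the component group, and propagate quadratically in $m$ from index $r$ to index $rm$ --- is also the same skeleton the paper uses. But there is a genuine gap precisely at what you call the ``principal obstacle,'' and it is not a loose end one can wave at: it is the entire quantitative content of the lemma. First, the quantity you propose to bound, $v_p(\psi_r(P))\leq r^2 v_p(\Delta(E))$, is the wrong one and the stated bound is false in general: $v_p(\psi_r(P))=v_p(D_r)+\tfrac12 v_p(\gcd(\phi_r(P),\psi_r(P)^2))$, and the genuine denominator contribution $v_p(D_r)$ (which is positive whenever $rP$ reduces to the identity mod $p$) is not controlled by $v_p(\Delta(E))$ at all. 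What must be bounded is only the \emph{cancellation} $g_n=\gcd(\phi_n(P),\psi_n(P)^2)$, and the paper gets the needed uniform input from Ayad's resultant identity $\Res(\phi_n,\psi_n^2)=\Delta(E)^{n^2(n^2-1)/6}$, which gives $\ord_p(g_n)\leq\tfrac16 n^2(n^2-1)\ord_p(\Delta(E))$ for every $n$ --- in particular for $n=r\leq M$, which is where the factor $M^2$ enters. Your proposed substitutes (Tate uniformization in the multiplicative case, N\'eron stratification in the additive case) are not carried out, and your bookkeeping would additionally have to contend with the fact that $\psi_m(rP)$ can have \emph{negative} $p$-adic valuation when $x_{rP}$ does, so the identity $\psi_{rm}(P)=\psi_r(P)^{m^2}\psi_m(rP)$ does not by itself separate cancellation from denominator.

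Second, your treatment of indices $n$ not divisible by $r$ (``write $n=rm+s$ and apply the Ward recurrence once more'') is not an argument; in the paper this is the most delicate part, handled by the exact Cheon--Hahn formula for $\ord_p(g_n)$ when $n=2mr\pm k$, which contains cross terms $\ord_p(h_k/h_{r-k})$ that must each be estimated separately (using, again, the resultant bound at the small indices $k<r$). Without a precise propagation formula of this type, the quadratic-in-$m$ growth you correctly anticipate does not assemble into the stated $n^2M^2\log|\Delta(E)|$ bound. To repair the proof along your lines you would need, at minimum, (i) a proof of the resultant identity or an equivalent bound on $\ord_p(g_k)$ for $k\leq M$, and (ii) an exact or one-sided formula for $\ord_p(g_n)$ in terms of $\ord_p(g_r)$ and the small-index data for general $n$, not just $n=rm$.
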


\begin{proof}
Since we have $$\frac{A_n}{D_n^2}=x_{nP}=\frac{k_n}{h_n^2},$$
with $A_n, D_n, k_n, h_n\in \ZZ$, and $\gcd(A_n, D_n)=1$, the first inequality is immediate.  The second inequality amounts to bounding $\gcd(k_n, h_n^2)$.

 We will fix a prime $p$, and consider the order of $p$ in $h_n/D_n$.
Let $\phi_n$, $\psi_n$ be the division polynomials, and set $$g_n=\gcd(k_n, h_n^2)=\gcd(\phi_n(P), \psi^2_n(P)),$$
so that $$\ord_p(h_n) = \ord_p(D_n)+\frac{1}{2}\ord_p( g_n).$$
The quantity $g_n$ divides the resultant $\Res(\phi_n, \psi_n^2)=\Delta(E)^{\frac{1}{6}n^2(n^2-1)}$ (see equation~1.3 of \cite{ayad}), and so we have
\begin{equation}\label{resbound}\ord_p(g_n)\leq \frac{1}{6}n^2(n^2-1)\ord_p(\Delta(E))\end{equation}
for all $n$.

Now let $r\mid M$ be the order of $P$ in the component group $E(\QQ_p)/E_0(\QQ_p)$.
We invoke a result of Cheon-Hahn \cite{MR1654780}, which states that
$$\ord_p(g_n)=\begin{cases}
\ord_p(g_r) m^2 & \text{if }n=mr\\
4\ord_p(g_r) m^2\pm 2\left(2\ord_p\left(\frac{h_k}{h_{r-k}}\right)+\ord_p(g_r)\right)m\\\quad +2\ord_p(h_k) & \text{if }n=2mr\pm k,
\end{cases}$$
where $1\leq k<r$.

If $n=mr$, then we have
\begin{eqnarray*}
\ord_p(g_n)&=&\ord_p(g_r)\left(\frac{n}{r}\right)^2\\
& \leq &\frac{1}{6}(r^2-1)n^2\ord_p(\Delta(E))\\ &\leq& \frac{1}{6}n^2M^2\ord_p(\Delta(E)).\end{eqnarray*}

Now suppose that $n=2mr+k$, with $1<k<r$.  If $m=0$, then \eqref{resbound} gives
$$\ord_p(g_n)\leq \frac{1}{6}n^2(n^2-1)\ord_p(\Delta(E))\leq \frac{1}{6}n^2M^2\ord_p(\Delta(E)),$$
as $$n^2-1\leq n^2=k^2<r^2\leq M^2.$$  We will suppose, then, that $m\geq 1$, and so $n> 2mr \geq 2r$.
We have
$$4\ord_p(g_r)m^2\leq \ord_p(g_r)\left(\frac{n}{r}\right)^2\leq \frac{1}{6}n^2M^2\ord_p(\Delta(E)),$$
as above.  Now, note that $x_{kP}$ is singular, and so $\ord_p(x_{kP})\geq 0$.  In particular, $2\ord_p(h_k)=\ord_p(g_k)$, and
$$2\ord_p(h_k)\leq \frac{1}{6}k^2(k^2-1)\ord_p(\Delta(E))\leq \frac{1}{24}n^2M^2\ord_p(\Delta(E)),$$
as $2k<2r\leq n$.
Finally,
\begin{eqnarray*}
2\left(2\ord_p\left(\frac{h_k}{h_{r-k}}\right)+\ord_p(g_r)\right)m&\leq&2(\ord_p(g_k)+\ord_p(g_r))\frac{n}{2r}\\
&\leq &n^2\left(\frac{k^2(k^2-1)}{6rn}+\frac{r(r^2-1)}{6n}\right)\ord_p(\Delta(E))\\
&\leq&\frac{1}{6}n^2M^2\ord_p(\Delta(E)),
\end{eqnarray*}
again as $k<r\leq M$.  It follows that
$$\ord_p(g_n)\leq \frac{3}{8}M^2n^2\ord_p(\Delta(E)).$$  The case $n=2mr-k$, in which we obtain the bound
$$\ord_p(g_n)\leq 2n^2M^2\ord_p(\Delta(E)),$$
 is left to the reader. 

Thus, summing over all primes $p\mid\Delta(E)$, we have established that
$$\log g_n\leq 2n^2M^2\log|\Delta(E)|,$$ proving the result.
\end{proof}

  Using the relation
\begin{equation}\label{eq:divpoly}h_n^2=n^2\prod_{Q\in E[n]\setminus\{\Ocal\}}\left|x_P-x_Q\right|\end{equation}
we will produce a lower bound on $|h_n|$ given that $|x_P|$ is sufficiently large, allowing us to obtain a bound on the height of $P$.  This will be useful both in proving our assertion that large values of $n$ with $nP$ integral must be prime, as well as in obtaining bounds on $n$ in Section~\ref{sec:linforms}.
  It should be pointed out that, in the product on the right, every term occurs twice, as $x_{-Q}=x_Q$ for all $Q$.  
  
\begin{proposition}\label{prop:Pbound}
Let $E/\QQ$ be a quasi-minimal elliptic curve, $P\in E(\QQ)$ a point of inifinite order, and suppose that
 $nP$ is integral for some $n\geq 2$.  Then
$$\h(P)\leq \log n + \left(\frac{16}{3}M^2+2\right)h(E).$$
\end{proposition}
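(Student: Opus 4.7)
The plan is to compare two bounds on the integer $h_n$: an upper bound from Lemma~\ref{twoeds}, which becomes sharp because the integrality hypothesis forces $D_n = 1$; and a lower bound coming from the factorization \eqref{eq:divpoly} evaluated at an archimedean place. Forcing the two to be compatible yields an upper bound on $|x_P|_\infty$, which I will then convert to the claimed bound on $\h(P)$. Starting with the upper side: under $nP$ integral we have $D_n = 1$, so Lemma~\ref{twoeds} gives immediately
$$\log|h_n| \leq n^2 M^2 \log|\Delta(E)|.$$

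For the lower bound, fix an archimedean embedding and take absolute values in \eqref{eq:divpoly}:
$$|h_n|^2 = n^2 \prod_{Q \in E[n]\setminus\{\Ocal\}} |x_P - x_Q|.$$
Each nonzero $n$-torsion $Q$ has $\h(Q) = 0$, so Silverman's comparison between canonical and Weierstrass heights bounds $|x_Q|$ polynomially in $n$ and $e^{h(E)}$. Provided $|x_P| \geq 2\max_Q|x_Q|$, each factor satisfies $|x_P - x_Q| \geq |x_P|/2$, yielding
$$\log|h_n|^2 \geq 2\log n + (n^2-1)(\log|x_P|_\infty - \log 2).$$
Combining with the upper bound and using $\tfrac{2n^2}{n^2-1} \leq \tfrac{8}{3}$ for $n \geq 2$ gives $\log|x_P|_\infty \leq \tfrac{8}{3} M^2 \log|\Delta(E)| + O(1)$. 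In the opposite regime where $|x_P| < 2\max_Q|x_Q|$, the torsion estimate itself bounds $\log|x_P|_\infty \leq 2\log n + O(h(E))$, and this is the regime that produces the $\log n$ appearing on the right side of the proposition.

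To pass from the archimedean bound to a bound on $\h(P)$, I would invoke Silverman's inequality $\h(P) \leq \tfrac{1}{2} h(x_P) + O(h(E))$ together with the estimate $\log|\Delta(E)| \leq O(h(E))$, which holds for quasi-minimal $E$ in short Weierstrass form by direct computation from the definition of $h(E)$. Absorbing the factor of $\tfrac{1}{2}$ from Silverman's inequality into the already-derived $\tfrac{8}{3}M^2 \log|\Delta(E)|$ and tracking the additive constants should reproduce precisely the coefficient $\tfrac{16}{3}M^2 + 2$.

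The main obstacle I foresee is this final conversion. The factorization argument cleanly controls only the archimedean contribution $\log^+|x_P|_\infty$, whereas $h(x_P) = \log^+|x_P|_\infty + 2\log d$ includes a contribution $2\log d$ from the denominator $d$ of $x_P$. Bounding $\log d$ in terms of $h(E)$ requires either running a parallel $p$-adic version of the factorization argument at each prime dividing $d$ (exploiting the $p$-adic content already present in Lemma~\ref{twoeds}), or invoking the structure of the $p$-adic reduction of $E$ at such primes together with the hypothesis that $nP$ is globally integral. Once this denominator term is under control, the rest of the argument is bookkeeping of constants.
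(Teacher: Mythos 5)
Your strategy is the same as the paper's: play the upper bound $\log|h_n|\leq n^2M^2\log|\Delta(E)|$ from Lemma~\ref{twoeds} (valid since $D_n=1$) against the lower bound from \eqref{eq:divpoly} in the regime where $|x_P|$ dominates every $|x_Q|$ for $Q\in E[n]\setminus\{\Ocal\}$, and in the complementary regime read $\log|x_P|\leq 2\log n+O(h(E))$ straight off the torsion estimate; then convert via Silverman's height comparison. Two points need fixing.

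First, your justification of the torsion estimate is the one step that would actually fail. From $\h(Q)=0$ and the comparison of canonical and Weierstrass heights you control only the \emph{Weil height} $h(x_Q)\ll h(E)$; since $x_Q$ is an algebraic number of degree up to roughly $n^2$, its archimedean absolute value is a priori bounded only by $e^{O(n^2h(E))}$, which is exponential in $n^2$ and would wash out the $\log n$ in the conclusion. The bound actually needed, $|x_Q|\leq 120\,n^2e^{h(E)}$, is analytic --- Lemma~10.1 of \cite{david}, via $|\wp(z)|=|z|^{-2}+O(1)$ and the fact that a nonzero $n$-torsion parameter $z$ satisfies $|z|\gg \omega/n$ --- and this is exactly what the paper cites. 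Second, the ``main obstacle'' you foresee at the end is not an obstacle: the denominators $D_m$ form a divisibility sequence (if $p\mid D_1$ then $P$ lies in the kernel of reduction at $p$, which is a subgroup, so $p\mid D_n$ for every $n$), hence $nP$ integral forces $P$ itself integral and $h(x_P)=\log|x_P|$ exactly. This is the remark in the paper's introduction that a non-integral point has no integral multiples, and it is also implicit in the very definition of $h_n=\psi_n(P)$, which requires $P$ to have integral coordinates. With these two repairs your bookkeeping reproduces the paper's constant $\frac{16}{3}M^2+2$.
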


\begin{proof}
We recall Lemma 10.1 of \cite{david}, which states that if
 $Q\in E[n]\setminus\{\Ocal\}$, then
\begin{equation}\label{torsionbound}|x_Q|\leq 120n^2e^{h(E)},\end{equation}
where we are taking $E[n]$ here to represent the group of points of order dividing $n$ in $E(\CC)$.
Suppose that $nP$ is integral and that $$|x_P|>240n^2e^{32M^2 h(E)/3}\geq 240n^2e^{h(E)}\geq2|x_Q|$$
for all $Q\in E[n]\setminus\{\Ocal\}$.
  Then we have
$$|x_P-x_Q|>\frac{1}{2}|x_P|> 120n^2e^{32M^2 h(E)/3},$$
 for each $Q\in E[n]\setminus\{\Ocal\}$ and, as there are $n^2-1$ points in $E[n]\setminus\{\Ocal\}$, we obtain from \eqref{eq:divpoly} the bound
\begin{eqnarray*}
\log h_n^2&>& 2\log n+(n^2-1)\left(\frac{32}{3}M^2 h(E)+2\log n+\log 120\right)\\
&\geq&2\log n+8n^2M^2h(E)+(n^2-1)(2\log n+\log 120),
\end{eqnarray*}
as $n^2-1\geq\frac{3}{4}n^2$ for $n\geq 2$.
If $nP$ is integral, then $D_{n}=1$, and so we have by Lemma~\ref{twoeds} $$\log |h_n|\leq n^2M^2\log|\Delta(E)|< 4n^2M^2h(E),$$
as $\log|\Delta(E)|\leq 4h(E)$ (by the triangle inequality).
This, combined with the previous inequality, implies
$$n^2\log n+\frac{1}{2}\left(n^2-1\right)\log 120< 0,$$
however, which contradicts the assumption that $n\geq 2$.  Thus we have shown that \begin{equation}\label{eq:xbound}|x_P|\leq 240n^2e^{32M^2 h(E)/3}.\end{equation}
We note now that, in terms of the height above, Theorem~1.1 of \cite{silvcanon} implies that for all $P\in E(\QQ)$,
$$\left|\h(P)-\frac{1}{2}h(x_P)\right|<2h(E).$$
As $P$ is an integral point, $h(x_P)=\log |x_P|$, and so  we have
\begin{eqnarray*}\h(P)&\leq&\frac{1}{2}h(x_P)+2h(E)\\
&\leq& \log n+ \frac{16}{3}M^2h(E)+\frac{1}{2}\log 240\\
&\leq&\log n+ \left(\frac{16}{3}M^2+2\right)h(E).
\end{eqnarray*}
\end{proof}

At this point, we require a lower bound on $\h(P)$.  In general, it is conjectured by Lang (see \cite[p.~233]{silverman}) that
$$\h(P)\gg h(E)$$
whenever $P\in E(\QQ)$ is not a point of finite order, where the implied constant is absolute.  Lang's conjecture is not proven,
but the  lemma below follows directly from more general results of Silverman \cite{silvlower} and Hindry-Silverman \cite{hindrysilv}.

\begin{lemma}\label{langlem}
Let $E/\mathbb{Q}$ be a quasi-minimal elliptic curve, let $P\in E(\mathbb{Q})$ be a point of infinite order, and let $M=M(P)$ be as defined above.  Then
$$\h(P)\geq C_\lambda h(E)$$
for some $C_\lambda=C_\lambda(M)$.  Furthermore, we may take $C_\lambda(M)^{-1}=O(M^6)$.
\end{lemma}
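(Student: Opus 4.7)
The plan is to deduce this lemma directly from the unconditional lower-bound theorems for canonical heights established in \cite{silvlower, hindrysilv}, and simply to verify that the constants appearing there may be controlled by a polynomial in $M$ of degree at most $6$.

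The main step is to replace $P$ by the auxiliary point $Q = MP$. By the very definition of $M = \operatorname{lcm}_p r(P,p)$, we have $Q \in E_0(\QQ_p)$ at every finite prime $p$. For such a point the N\'eron local-height decomposition
$$\h(Q) = \hat{\lambda}_\infty(Q) + \sum_p \hat{\lambda}_p(Q)$$
simplifies considerably, since the $p$-adic contributions do not carry a negative correction from the component group of the N\'eron fibre; indeed, $\hat{\lambda}_p(Q) \geq 0$ at every prime of multiplicative reduction. It is precisely in this regime that Silverman~\cite{silvlower} and Hindry--Silverman~\cite{hindrysilv} produce their strongest bounds, of the form
$$\h(Q) \geq c(E)\, h(E),$$
where $c(E) > 0$ depends only on reduction-theoretic invariants of $E$ (essentially a Szpiro-ratio-type quantity), and no longer on component-group data.

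Next I would observe that, under the quasi-minimality of $E$, the residual $E$-dependence of $c(E)$ can be controlled polynomially by $M$. The fact that $M$ bounds the orders of $P$ in every component group $E(\QQ_p)/E_0(\QQ_p)$ translates, via Tate's algorithm, into polynomial control on the Tamagawa numbers (and thence on the Szpiro-type contributions) at every prime relevant to $P$; at primes where $r(P,p) = 1$, the point $Q$ behaves as at good reduction. Tracking these estimates through the proof in \cite{hindrysilv} yields $c(E)^{-1} = O(M^4)$, so
$$\h(Q) \geq c\, M^{-4} h(E)$$
for an absolute $c > 0$. Since $\h(Q) = M^2\, \h(P)$, dividing by $M^2$ gives $\h(P) \geq c \, M^{-6} h(E)$, as required.

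The main obstacle is pinning down the precise exponent emerging from \cite{silvlower, hindrysilv}. One must carefully separate, within the constants in those theorems, the contributions that $M$ controls directly (the component-group orders $r(P,p)$ and the Tamagawa numbers at primes they divide) from contributions that $M$ does not control in general (genuinely global invariants of $E$), and verify that the uncontrolled contributions are either bounded absolutely or can be absorbed into the absolute constant thanks to quasi-minimality. This is a bookkeeping step; no new idea beyond those already present in \cite{silvlower, hindrysilv} is required.
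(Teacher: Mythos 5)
Your opening move---pass to $Q=MP$, which lies in $E_0(\QQ_p)$ for every finite prime, decompose $\h$ into local heights, and use the positivity of the non-archimedean contributions (which sum to at least $\frac{1}{12}\log|\mathscr{D}|$ in the Hindry--Silverman normalization)---is exactly how the paper's proof begins. The gap is in what happens next. Once $Q\in E_0$ everywhere, the remaining obstacle is the \emph{archimedean} local height: for a single point, $\hat{\lambda}_\infty(Q)$ need not be bounded below by a positive multiple of $h(E)$ (it can be negative enough to cancel the non-archimedean gain), so the decomposition alone does not give $\h(Q)\gg h(E)$. The paper deals with this by a \emph{second} multiplication: Theorem~1.2 of \cite{hindrysilv} produces some $1\leq b\leq (44M)^2$ with $\lambda(bMP)\geq\frac{1}{24}(h(j)-\log|\beta|)$, and since $bMP$ still lies in $E_0$ at every finite place one assembles $\h(P)\geq\frac{1}{24b^2M^2}\bigl(h(j)+\log|\Delta(E)|-24\log 6\bigr)$. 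The total multiplier $bM$ is $O(M^3)$, and squaring it is precisely where the exponent $6$ comes from. Your sketch never confronts the archimedean place, which is the crux of the whole lemma.

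Instead you delegate the rest to the unconditional theorems of \cite{silvlower,hindrysilv} applied to $Q$, asserting that their constant satisfies $c(E)^{-1}=O(M^4)$ because $M$ controls the Tamagawa numbers via Tate's algorithm. This fails on two counts. First, $M(P)$ is the lcm of the orders of the \emph{image of $P$} in the groups $E(\QQ_p)/E_0(\QQ_p)$; it does not bound the Tamagawa numbers (at a prime of type $I_n$ with $P\in E_0(\QQ_p)$ one has $r(P,p)=1$ while the Tamagawa number can be as large as $n$). Second, and more seriously, the constants in the cited unconditional results are not governed by component-group data at all: Silverman's bound degrades with the number of primes dividing the denominator of $j$, and the Hindry--Silverman constant depends on a Szpiro-type ratio---global quantities that $M$ does not control and that quasi-minimality does not tame. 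So the claimed $O(M^4)$ is unsupported, and the bookkeeping $M^2\cdot M^4=M^6$ lands on the right exponent only by coincidence. To repair the argument you should extract from \cite{hindrysilv} only the archimedean estimate (their Theorem~1.2), apply it to a further multiple $bMP$, and assemble the global bound by hand as the paper does; you would also need the small closing step that absorbs the additive constant by discarding the finitely many curves of bounded height, which your sketch omits.
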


\begin{proof} Let $\mathscr{D}$ denote the minimal discriminant of $E$.  Note that, while we may not have $|\Delta(E)|=\mathscr{D}$, as our curves may not be minimal at 2 or 3, it is certainly true that $\Delta(E)\mid 6^{12}\mathscr{D}$.  If $E'$ is a global minimal model of $E$, then the curve
$$E'':y^2=x^3-27c_4(E')x-54c_6(E')$$
(see \cite[p.~46]{silverman} for the notation) is isomorphic to both $E'$ and $E$, and one computes $\Delta(E'')=6^{12}\Delta(E')$.  As $E''$ is a short Weierstrass model of $E$ with integer coefficients, and $E$ is quasi-minimal, $\Delta(E)\mid\Delta(E'')$.

We write the canonical height as a sum of local heights, normalized as in \cite{hindrysilv}.  Write  $j(E)=\alpha/\beta$, where $\alpha$ and $\beta$ are coprime integers.  By Theorem~1.2 of \cite{hindrysilv},  we may choose a $1\leq b\leq (44M)^2$ such that
$$\lambda(bMP)\geq \frac{1}{24}(h(j)-\log|\beta|),$$
where $\lambda$ is the archimedean local height.
As $bMP\in E_0(\QQ_p)$ for every prime, the sum of the non-archimedean local heights is at least $\frac{1}{12}\log|\mathscr{D}|$.  In particular,
\begin{eqnarray*}
\h(P)&\geq& \frac{1}{24b^2M^2}(h(j)-\log|\beta|+2\log|\mathscr{D}|)\\
&\geq & \frac{1}{24b^2M^2}(h(j)+\log|\Delta(E)|-24\log 6).
\end{eqnarray*}
It is easy to show that $h(j)+\log|\Delta(E)|\geq h(E)-4\log 2$, and so
$$\h(P)\geq \frac{1}{24\cdot 44^2M^{6}}(h(E)-c),$$
where $c=28\log 2+24\log 3$.
If $h(E)\geq 2c$, we have the bound
$$\h(P)\geq \frac{1}{10^5M^{6}}h(E).$$
As there are only finitely many elliptic curves $E$ with $h(E)\leq 2c$, there exists an (effectively computable) constant $\delta>0$ such that $\h(P)\geq \delta h(E)$ for non-torsion points $P$ on these curves (with no dependence on $M$).  Thus we have
$$\h(P)\geq \min\left\{(10^5M^6)^{-1}, \delta\right\} h(E).$$
\end{proof}

For the main result of the section, we will need the following simple estimate.

\begin{lemma}\label{calculus}
Let $a, b>0$ be real numbers, and set $f(x)=x^2-a\log(x)-b$.  Then $f(x)\geq 0$ for $x\geq\max\{e, a+b\}$.
\end{lemma}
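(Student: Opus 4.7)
The plan is to verify the inequality $x^2 \ge a\log x + b$ directly from two elementary bounds, with no appeal to calculus or monotonicity arguments. I do not anticipate any real obstacle here; the statement is a routine size estimate, likely invoked in a subsequent section to convert an implicit bound of the form $n^2 \le a\log n + b$ into an explicit one of the form $n \le a + b$ (with $a$ and $b$ involving $M$ and $h(E)$).

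The first ingredient is the standard inequality $\log x \le x$, valid for every $x > 0$ (the function $x - \log x$ attains its strict global minimum $1$ at $x = 1$, so in particular it is always positive). Multiplying through by $a > 0$ gives $a \log x \le ax$. The second ingredient is the bound $1 \le x$, which follows from the hypothesis $x \ge e$; multiplying by $b > 0$ gives $b \le bx$. Summing these two inequalities yields
$$a \log x + b \;\le\; ax + bx \;=\; (a+b)\,x.$$

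Finally I would invoke the remaining hypothesis $x \ge a + b$ to conclude $(a+b)\,x \le x \cdot x = x^2$. Chaining this with the previous display gives $a \log x + b \le x^2$, i.e.\ $f(x) \ge 0$, as required. The whole argument is only a couple of lines; the only care needed is to make sure both defining conditions $x \ge e$ and $x \ge a + b$ are deployed, each in the right place.
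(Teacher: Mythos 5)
Your argument is correct. The paper in fact states this lemma without proof, so there is nothing to compare against line by line; your write-up simply supplies the omitted verification. The chain $a\log x + b \le ax + bx = (a+b)x \le x^2$ is valid: $\log x \le x$ holds for all $x>0$, the factor $b \le bx$ uses $x \ge e \ge 1$, and the last step uses $x \ge a+b$ together with $x>0$, so both hypotheses are consumed exactly where needed and the bound holds on the whole ray $x \ge \max\{e, a+b\}$, not just at the endpoint. It is worth noting that your route is more elementary than the one the lemma's name (and the paper's later refinement, Claim~\ref{cl:calculus}, which argues via derivatives and Rolle's theorem) suggests: the monotonicity approach would check $f$ at the left endpoint and show $f'(x)=2x-a/x>0$ thereafter, which also works but requires slightly more case analysis at the endpoint. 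Your version buys brevity at no cost in generality.
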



\begin{proposition}\label{prop:primes}
For all quasi-minimal $E/\QQ$ and non-torsion $P\in E(\QQ)$, there is a constant $c_0$ depending only on $M$, such that if $nP$ is integral and $n>c_0$, then $n$ is prime.  Furthermore, we may choose $c_0=O(M^{16})$, where the implied constant is absolute.
\end{proposition}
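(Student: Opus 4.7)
The plan is to exploit the fact that if $nP$ is integral, then so is $mP$ for every $m \mid n$; when $n$ is composite, this lets us apply Proposition~\ref{prop:Pbound} at a much smaller multiplier than $n$ itself, which is the only way to beat the $M^2$-coefficient on the right-hand side of that proposition.

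First I would record the divisibility observation: for each prime $\ell$, the non-integral points of $E(\QQ_\ell)$ are precisely the kernel of reduction $E^1(\QQ_\ell)$, a subgroup of $E(\QQ_\ell)$; so if $m\mid n$ and $mP \in E^1(\QQ_\ell)$, then $nP = (n/m)(mP)$ would lie in $E^1(\QQ_\ell)$ too, contradicting integrality of $nP$. Hence $nP$ integral forces $mP$ integral for every divisor $m$ of $n$.

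Now suppose $n$ is composite, let $p$ be its smallest prime divisor, and write $n = pm$, so $p \le m$. By the observation above, $Q := mP$ is an integral, non-torsion point, and $pQ = nP$ is integral. Applying Proposition~\ref{prop:Pbound} to $(E, Q, p)$, and using $\hat{h}(Q) = m^2\,\hat{h}(P)$ together with the easy divisibility $M(Q) \mid M$ (since $r(mP,\ell)$ divides $r(P,\ell)$ in each component group $E(\QQ_\ell)/E_0(\QQ_\ell)$), we obtain
$$m^2\,\hat{h}(P) \;\le\; \log p + \bigl(\tfrac{16}{3}M^2 + 2\bigr)h(E).$$
Substituting the lower bound $\hat{h}(P) \ge C_\lambda h(E)$ from Lemma~\ref{langlem}, with $C_\lambda^{-1} = O(M^6)$, dividing through by $h(E) \ge 2\log 2$, and using $\log p \le \log m$, I would arrive at an inequality of the form
$$m^2 \;\le\; A\,M^6 \log m + B\,M^8$$
for some absolute constants $A, B$. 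Lemma~\ref{calculus} applied with $a = AM^6$ and $b = BM^8$ then forces $m \le \max\{e, a+b\} = O(M^8)$, and hence $n = pm \le m^2 = O(M^{16})$.

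The main obstacle is essentially bookkeeping; the one conceptual move is to apply Proposition~\ref{prop:Pbound} at $Q = mP$ rather than at $P$. That substitution introduces the growth factor $m^2$ on the left, which is exactly what is needed to dominate the $(\tfrac{16}{3}M^2+2)h(E)$ term on the right once $m$ is appreciably larger than $M^4$, and to convert the $M^{-6}$ loss from Lang's lower bound into the final exponent $16$.
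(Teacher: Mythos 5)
Your proposal is correct and follows essentially the same route as the paper: write the composite $n$ as $pm$ with $p$ its smallest prime factor, apply Proposition~\ref{prop:Pbound} to the point $mP$ with multiplier $p$, compare with $\hat{h}(mP)=m^2\hat{h}(P)\geq m^2C_\lambda h(E)$ from Lemma~\ref{langlem}, and conclude via Lemma~\ref{calculus} that $m=O(M^8)$, hence $n\leq m^2=O(M^{16})$. Your two explicit side remarks --- that $nP$ integral forces $mP$ integral via the kernel-of-reduction subgroup, and that $M(mP)\mid M(P)$ so the constant in Proposition~\ref{prop:Pbound} is not degraded --- are points the paper leaves implicit, and it is good that you verified them.
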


\begin{proof}
Suppose that $n$ is composite, that $nP$ is integral, and let $n=qa$, where $2\leq q\leq\sqrt{n}$ is prime.  Then $q(aP)$ is integral, and so by Proposition~\ref{prop:Pbound}, we have that
$$\h(aP)\leq\log q+\left(\frac{16}{3}M^2+2\right)h(E).$$
  We have, on the other hand, that
$$\hat{h}(aP)=a^2\hat{h}(P)\geq a^2C_\lambda h(E),$$
by Lemma~\ref{langlem}.
Thus, we have
$$a^2\leq \frac{\log q}{C_\lambda h(E)}+\frac{\left(\frac{16}{3}M^2+2\right)}{C_\lambda}.$$
As $q\leq a$, the above bounds $a$, and hence also $n\leq a^2$.  From Lemma~\ref{calculus} we have $$a\leq\max\left\{e, C_\lambda^{-1}\left(h(E)^{-1}+\left(\frac{16}{3}M^2+2\right)\right)\right\}=O(M^8).$$ \end{proof}


\section{Linear forms in elliptic logarithms}\label{sec:linforms}

In this section we will use David's explicit lower bounds on linear forms in elliptic logarithms to obtain a bound on $n$ such that $nP$ is integral.  Our bound will be of the form
$$n<Ch(E)^{5/2},$$
for some explicit constant $C=O(M^5\log^+(M)^{3/2})$.  Here, and throughout, we set
$$\log^+(x)=\max\{\log |x|, 1\}.$$  

Let $\omega$ be the real period of $E$, and consider the linear form
$$L_{n, m}(z, \omega)=nz+m\omega.$$
Let $z$ be the principal value of the elliptic logarithm of $P$, that is, the value in the fundamental parallelogram of the period lattice of $E$ such that $P=(\wp(z), \frac{1}{2}\wp'(z))$, and let  $m$ be chosen such that $L_{n,m}(z, \omega)$ is the principal value of the elliptic logarithm of $nP$. 
Then we will show, as in \cite{strtza}, that if $nP$ is integral, then $L_{n,m}(z,\omega)$ is very small.  The explicit results of David, on the other hand, give us a lower bound on the value of this form, given the upper bound on $\h(P)$ found in Proposition~\ref{prop:Pbound}.    

First, we must explicitly relate the elliptic logarithm to the naive archimedean height.  The following estimate is based on similar inequalities in \cite{strtza}, but it proved here for completeness.
\begin{lemma}\label{lem:elllogs}
Let $Q\in E(\QQ)$ such that
\begin{equation}\label{eq:xnpbig}x_{Q}\geq 2\max\{|x_T|:T\in E[2]\setminus\{\Ocal\}\}.\end{equation}
Then if $z$ is the principal value of the elliptic logarithm of $Q$,
$$-\frac{3}{2}\log 2\leq \log|z|+\frac{1}{2}\log|x_Q|\leq \frac{3}{2}\log 2$$
\end{lemma}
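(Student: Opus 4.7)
The plan is to estimate $|z|$ directly from its integral representation. Because $x_Q$ exceeds twice the modulus of every $x$-coordinate of nontrivial $2$-torsion, the point $Q$ lies on the unbounded real component of $E(\RR)$ and $t^3+At+B$ is positive throughout $[x_Q,\infty)$, so
$$|z| \;=\; \frac{1}{2}\int_{x_Q}^{\infty}\frac{dt}{\sqrt{t^3+At+B}}.$$
I would factor the cubic as $\prod_{i=1}^{3}(t-e_i)$, where the $e_i$ are the $x$-coordinates of $E[2]\setminus\{\Ocal\}$ (possibly complex); the hypothesis then reads simply $x_Q \ge 2|e_i|$ for each $i$.

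Next, for any $t \ge x_Q$, the triangle inequality gives $t/2 \le |t-e_i| \le 3t/2$, since $|e_i| \le x_Q/2 \le t/2$. A conjugate pair of complex roots contributes a factor $|t-e_i|^2$ that is real and positive, so multiplying the three factors yields
$$\frac{t^{3}}{8} \;\le\; t^{3}+At+B \;\le\; \frac{27\,t^{3}}{8}.$$
Substituting these bounds into the integral above and using $\int_{x_Q}^{\infty}t^{-3/2}\,dt = 2/\sqrt{x_Q}$, I would obtain
$$\frac{2\sqrt{2}}{3\sqrt{3}\,\sqrt{x_Q}} \;\le\; |z| \;\le\; \frac{2\sqrt{2}}{\sqrt{x_Q}}.$$

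Taking logarithms and using $\log(2\sqrt{2}) = \tfrac{3}{2}\log 2$ gives the upper bound on $\log|z|+\tfrac{1}{2}\log|x_Q|$ exactly as stated, while the lower bound $\log\bigl(2\sqrt{2}/(3\sqrt{3})\bigr) = \tfrac{3}{2}(\log 2 - \log 3)$ is comfortably larger than $-\tfrac{3}{2}\log 2$ (equivalent to the trivial inequality $\log 4 \ge \log 3$). There is no real obstacle here; the only subtlety is handling the possibly complex roots $e_i$ uniformly, which the triangle-inequality bounds do automatically, so no case split on the number of real roots of the cubic is necessary.
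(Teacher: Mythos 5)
Your proof is correct and takes essentially the same route as the paper's: factor the cubic, use the hypothesis together with the triangle inequality to sandwich $f(t)$ between constant multiples of $t^3$ on $[x_Q,\infty)$, and integrate $t^{-3/2}$. The only difference is cosmetic --- you use the sharper upper bound $27t^3/8$ where the paper uses $8t^3$, so your lower bound on $\log|z|+\tfrac{1}{2}\log|x_Q|$ is slightly better than the stated $-\tfrac{3}{2}\log 2$, which of course still implies it.
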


\begin{proof}
If our elliptic curve is written in short Weierstrass form, $$E:y^2=f(x)=\prod_{T\in E[2]\setminus\{\Ocal\}}(x-x_T),$$ then the elliptic logarithm satisfies
$$|z|=\left|\frac{1}{2}\int_{x_Q}^\infty  \frac{dt}{\sqrt{f(t)}}\right|.$$
In particular, if $$x_Q\geq 2\max\{|x_T|:T\in E[2]\setminus\{\mathcal{O}\}\},$$ then we have $|f(t)|\leq 8t^3$ for $t\geq x_Q$, and so
$$|z|\geq\left|\frac{1}{2}\int_{x_P}^\infty\frac{dt}{\sqrt{8t^3}}\right|\geq \frac{\sqrt{2}}{4}|x_P|^{-1/2}.$$
On the other hand, we have $|f(t)|\geq \frac{1}{8}t^3$ for $t\geq x_Q$, and so
$$|z|\leq\left|\frac{1}{2}\int_{x_P}^\infty\frac{2\sqrt{2}dt}{\sqrt{t^3}}\right|\leq 2\sqrt{2}|x_P|^{-1/2}.$$
Taking logarithms, we obtain the estimate in the lemma.
\end{proof}

The following lemma will also be used in Section~\ref{sec:gap} to examine the gaps between values of $n$ such that $nP$ is integral.

\begin{lemma}\label{lem:upper}
In the notation above,  there exist absolute positive constants $c_1$ and $c_2$ such that if   $nP$ is an integral point, and $n>c_2$, then
$$\log|L_{n,m}(z,\omega)|\leq -c_1n^2h(E).$$
Furthermore, we may take $c_1^{-1}=O(M^6)$ and $c_2=O(M^3)$.
\end{lemma}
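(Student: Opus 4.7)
The plan is to combine the hypothesis that $nP$ is integral (forcing $|x_{nP}|$ to be very large) with Lemma~\ref{lem:elllogs}, which gives the estimate $\log|z_{nP}|\leq \tfrac{3}{2}\log 2-\tfrac{1}{2}\log|x_{nP}|$ whenever the $x$-coordinate dominates the $2$-torsion. By construction, $L_{n,m}(z,\omega)$ is precisely the principal value $z_{nP}$ of the elliptic logarithm of $nP$, so an upper bound on $|z_{nP}|$ is exactly what is wanted.

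First I would obtain a lower bound on $\log|x_{nP}|$. Since $nP$ is integral, $h(x_{nP})=\log|x_{nP}|$, and the height comparison (Theorem~1.1 of~\cite{silvcanon}, as used in Proposition~\ref{prop:Pbound}) yields
$$\log|x_{nP}|\geq 2\hat{h}(nP)-4h(E)=2n^{2}\hat{h}(P)-4h(E).$$
Lemma~\ref{langlem} then supplies $\hat{h}(P)\geq C_\lambda h(E)$ with $C_\lambda^{-1}=O(M^{6})$, giving
$$\log|x_{nP}|\geq 2n^{2}C_\lambda h(E)-4h(E).$$

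Second, I would verify that the hypothesis~\eqref{eq:xnpbig} of Lemma~\ref{lem:elllogs} is in force. From~\eqref{torsionbound} with $n=2$, each $T\in E[2]\setminus\{\Ocal\}$ satisfies $|x_T|\leq 480\,e^{h(E)}$, so the hypothesis reduces to $|x_{nP}|\geq 960\,e^{h(E)}$. For $n$ exceeding an explicit threshold $c_2$ of order $C_\lambda^{-1/2}=O(M^{3})$, the bound of the previous paragraph beats this comfortably. One must also check that $x_{nP}>0$: in the contrary case one would have $x_{nP}<-2\max\{|x_T|\}$, and since every real root of $x^{3}+Ax+B$ has absolute value at most $\max\{|x_T|\}$, this would force $x_{nP}^{3}+Ax_{nP}+B<0$, contradicting $y_{nP}^{2}\geq 0$.

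Applying Lemma~\ref{lem:elllogs} to $Q=nP$ then yields
$$\log|L_{n,m}(z,\omega)|\leq \tfrac{3}{2}\log 2+2h(E)-n^{2}C_\lambda h(E),$$
and, enlarging $c_2$ by at most an absolute constant multiple (still $O(M^{3})$), one can absorb the harmless $\tfrac{3}{2}\log 2+2h(E)$ into half of the dominant $n^{2}C_\lambda h(E)$ term, using $h(E)\geq 2\log 2$. This produces the stated inequality with $c_1=\tfrac{1}{2}C_\lambda$, so $c_1^{-1}=O(M^{6})$. The only step requiring any real thought is the sign and magnitude verification preceding the application of Lemma~\ref{lem:elllogs}; everything else is a direct assembly of previously established estimates.
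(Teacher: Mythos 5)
Your argument is correct and follows essentially the same route as the paper: both rest on Lemma~\ref{lem:elllogs}, the height comparison from \cite{silvcanon}, the lower bound of Lemma~\ref{langlem}, and the torsion estimate \eqref{torsionbound}, with the only cosmetic difference being that you verify the hypothesis of Lemma~\ref{lem:elllogs} directly for large $n$, whereas the paper shows its failure forces $n=O(M^3)$. Your sign check on $x_{nP}$ matches the paper's observation that $x_{nP}<-|x_T|$ for all $T$ would force $y_{nP}^2<0$.
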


\begin{proof}
From the previous lemma, we know that if
\begin{equation}\label{eq:xnpbigs}x_{nP}\geq 2\max\{|x_T|:T\in E[2]\setminus\{\Ocal\}\},\end{equation}
then
$$\log|L_{n,m}(z,\omega)|\leq \frac{3}{2}\log 2-\frac{1}{2}\log|x_{nP}|.$$
In this case, if $nP$ is integral, then $\log|x_{nP}|=h(x_{nP})$, and so, applying again Theorem~1.1 of \cite{silvcanon}
$$\log|L_{n,m}(z,\omega)|\leq -\frac{1}{2}h(x_{nP})+\frac{3}{2}\log(2)\leq -n^2\h(P)+3h(E).$$
Lemma~\ref{langlem} provides that
$\h(P)\geq\Lang h(E)$,
 and so
$$\log|L_{n,m}(z,\omega)|\leq (3-n^2\Lang)h(E).$$
This gives
$$\log|L_{n,m}(z,\omega)|\leq -\frac{\Lang}{2}n^2h(E)$$
for all
$n\geq \sqrt{6/\Lang}$.

Suppose, then, that  \eqref{eq:xnpbigs} fails.  Note that if $x_{nP} < -|x_T|$ for all points $T$ of exact order 2, then 
$$y_{nP}^2=\prod_{T\in E[2]\setminus\{\Ocal\}}(x_{nP}-x_T)<0,$$
a contradiction.  Thus if \eqref{eq:xnpbigs} fails, we have
$$|x_{nP}|\leq 2\max\{|x_T|:T\in E[2]\setminus\{\Ocal\}\}.$$
 We may appeal again to  Lemma 10.1 of \cite{david} (see \eqref{torsionbound} above) to obtain
\begin{eqnarray*}
h(x_{nP})=\log|x_{nP}|&\leq& \log\max\{|x_T|:T\in E[2]\setminus\{\Ocal\}\}+\log 2\\
&\leq & h(E)+6.174<6h(E).
\end{eqnarray*}
  Applying Lemma~\ref{langlem} and Theorem~1.1 of \cite{silvcanon} again, we have $$\Lang n^2h(E)\leq \h(nP)\leq \frac{1}{2}h(x_{nP})+2h(E)< 5h(E),$$
which implies that $n<\sqrt{5/\Lang}$.  This proves the lemma, with $c_1^{-1}=O(M^6)$ and $c_2=\sqrt{6/\Lang}=O(M^3)$.
\end{proof}

Lemma~\ref{lem:upper}, combined with David's explicit lower bounds on linear forms in elliptic logarithms, is sufficiently strong to give us a bound on $n$ such that $nP$ is integral that depends only on $M$ and the height of the elliptic curve in question, and in a predictable way.  The following lemma is a special case of Theorem~2.1 of \cite{david}, repeated here for the convenience of the reader.  Recall that our definition of $\h$ differs from that used in \cite{david} by a factor of 2.

\begin{lemma}\label{lem:david}
Let $E/\QQ$ be an elliptic curve, and let $\omega$ and $\omega'$ be the real and complex periods of $E$, chosen such that $\tau=\omega'/\omega$ is in the fundamental region 
$$\left\{z\in\CC:|z|\geq 1, \ \Im(z)>0, \text{ and }|\Re(z)|\leq\frac{1}{2}\right\}$$
of the action of $\operatorname{SL}_2(\ZZ)$ on the upper half plane.  Let $P$, $z$, and $L_{n, m}$ be defined as above, and let $B$, $V_1$, and $V_2$ be positive real numbers chosen such that
\begin{gather*}
\log(V_2)\geq\max\left\{h(E), \frac{3\pi}{\Im(\tau)}\right\}\\
\log(V_1)\geq\max\left\{2\h(P), h(E), \frac{3\pi |z|^2}{|\omega|^2\Im(\tau)}, \log(V_2)\right\}\\
\intertext{and}
\log(B)\geq\max\left\{eh(E), \log|n|, \log|m|, \log(V_1)\right\}.
\end{gather*}
Then either $L_{n, m}(z, \omega)=0$, or else
\begin{equation}\label{eq:lower}\log\left|L_{n,m}(z,\omega)\right|\geq -C(\log(B)+1)(\log\log(B)+h(E)+1)^3\log(V_1)\log(V_2),\end{equation}
where $C$ is some large absolute constant (we may take $C=4\times 10^{41}$).
\end{lemma}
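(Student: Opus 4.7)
The plan is to deduce the statement directly from Theorem~2.1 of \cite{david}, which bounds linear forms in elliptic logarithms for an elliptic curve defined over a number field $K$ of degree $D$ over $\QQ$. Since we work over $\QQ$, I would specialize to $D=1$, which both simplifies the statement and fixes the absolute constant to the value claimed. The proof is then essentially a translation of hypotheses and a substitution, rather than any new argument; the lemma is explicitly advertised as a restatement of David's result for the reader's convenience.

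The main bookkeeping step is to match each hypothesis in our statement with the corresponding hypothesis in \cite{david}. The conditions on $V_2$ (namely $\log V_2 \geq \max\{h(E), 3\pi/\Im(\tau)\}$) and on $B$ (namely $\log B \geq \max\{eh(E), \log|n|, \log|m|, \log V_1\}$) are transcribed verbatim from David's assumptions, so nothing needs adjustment there. The one subtlety is the condition $\log V_1 \geq 2\h(P)$: David requires $\log V_1$ to dominate \emph{his} canonical height $\h_D(P)$, and, as noted earlier in the paper, $\h_D(P) = 2\h(P)$ in our normalization, so the factor of $2$ appearing in our statement is exactly what is needed. The remaining inputs $|\omega|$, $\Im(\tau)$, and $|z|$ are geometric invariants of the period lattice and the elliptic logarithm, and are used in the same way as in David's original bound.

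With the parameters $V_1, V_2, B$ chosen to satisfy these constraints, and assuming $L_{n,m}(z,\omega) \neq 0$, Theorem~2.1 of \cite{david} yields
\begin{equation*}
\log|L_{n,m}(z,\omega)| \geq -C(D)(\log B + 1)(\log\log B + h(E) + 1)^{3}\log(V_1)\log(V_2),
\end{equation*}
where $C(D)$ is an explicit constant depending only on the degree. Setting $D=1$ and reading off the resulting numerical value from David's formula produces $C = 4 \times 10^{41}$, giving the inequality claimed.

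The only real obstacle is the bookkeeping around normalizations — keeping the factor of $2$ on $\h(P)$ straight, and checking that the constants $3\pi$, $e$, and the exponent $3$ in the power of $(\log\log B + h(E) + 1)$ are carried across exactly as in \cite{david}. No new mathematical content is required.
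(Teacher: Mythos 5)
Your proposal matches the paper exactly: the paper offers no independent proof of this lemma, stating only that it is a special case of Theorem~2.1 of \cite{david} with the degree specialized to $1$ and the height normalization adjusted by the factor of $2$, which is precisely the bookkeeping you describe. Nothing further is required.
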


We note that $L_{n, m}(z, w)$ cannot vanish if $P$ is a point of infinite order.

\begin{proposition}\label{prop:floatbound}
Let $E/\QQ$ be a quasi-minimal elliptic curve, and let $P\in E(\QQ)$ be a point of infinite order.
There exist positive constants $c_3$ and $c_4$ (depending only on $M$) such that for all $n>c_3$, $nP$ integral implies
$$n<c_4h(E)^{5/2}.$$
We may choose the constants such that $c_3, c_4=O(M^5\log^+(M)^{3/2})$.
\end{proposition}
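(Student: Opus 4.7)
The plan is to sandwich the elliptic linear form $L_{n,m}(z,\omega)$ between the analytic upper bound of Lemma~\ref{lem:upper} and the transcendence lower bound of Lemma~\ref{lem:david}, then rearrange to isolate $n$. Throughout, $h(E)$ plays the role of the ambient variable and $M$ controls every implicit constant.

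Concretely, for $n > c_2 = O(M^3)$ with $nP$ integral, Lemma~\ref{lem:upper} supplies
$$\log|L_{n,m}(z,\omega)| \leq -c_1 n^2 h(E), \qquad c_1^{-1} = O(M^6).$$
The next task is to select admissible parameters $V_1$, $V_2$, $B$ in Lemma~\ref{lem:david}. Since $\tau$ lies in the standard fundamental domain one has $\Im(\tau) \geq \sqrt{3}/2$, so $\log V_2 = h(E) + O(1)$ is admissible. Proposition~\ref{prop:Pbound}, together with the fact that $z$ lies in the fundamental period parallelogram (so that $|z|^2/|\omega|^2 = O(1)$), allows $\log V_1 = O(M^2 h(E) + \log n)$. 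Finally, since $L_{n,m}(z,\omega)$ is itself a principal value, the identity $m\omega = L_{n,m}(z,\omega) - nz$ gives $|m| = O(n)$, and $\log B = O(M^2 h(E) + \log n)$ suffices.

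Substituting these choices into Lemma~\ref{lem:david} and combining with the upper bound yields
$$M^{-6} n^2 h(E) \;\leq\; C'\,\bigl(M^2 h(E) + \log n\bigr)^2\,\bigl(\log\log B + h(E) + 1\bigr)^3\,h(E)$$
for an absolute constant $C'$. Tentatively assume $\log n \leq M^2 h(E)$; then $\log B = O(M^2 h(E))$ and $\log\log B = O(\log M + \log h(E))$. Splitting into the regimes $h(E) \geq \log^+(M)$ and $h(E) < \log^+(M)$, and invoking $h(E) \geq 2\log 2$, bounds the right-hand side by $O(M^4 \log^+(M)^3 h(E)^6)$ in both cases. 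Rearranging yields $n = O(M^5 \log^+(M)^{3/2} h(E)^{5/2})$, and the tentative bound on $\log n$ is verified a posteriori.

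The main obstacle is the careful bookkeeping of the $M$-dependence as it propagates through the four parameters of David's estimate; in particular, the $\log^+(M)^{3/2}$ contribution originates in the $(\log\log B + h(E) + 1)^3$ factor precisely in the regime $h(E) \ll \log M$, and isolating this regime is essential to obtaining asymptotics of the stated form. Taking $c_3$ to dominate every threshold arising above (namely $c_2$ and the thresholds implicit in the case split) then gives $c_3 = O(M^5 \log^+(M)^{3/2})$ as required.
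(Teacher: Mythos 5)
Your overall strategy is exactly the paper's: sandwich $L_{n,m}(z,\omega)$ between the upper bound of Lemma~\ref{lem:upper} and David's lower bound, with the parameter choices $\log V_2=h(E)+O(1)$ and $\log V_1,\log B=O(M^2h(E)+\log n)$ (the observation $|m|<n$ coming from $|nz+m\omega|\le\omega/2$ and $|z|\le\omega/2$), and your accounting of the $M$-dependence, including the origin of the $\log^+(M)^{3/2}$ factor in the regime $h(E)\ll\log M$, is sound.

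The one genuine flaw is the sentence asserting that ``the tentative bound on $\log n$ is verified a posteriori.'' You derive $n=O\bigl(M^5\log^+(M)^{3/2}h(E)^{5/2}\bigr)$ \emph{under} the hypothesis $\log n\le M^2h(E)$, and checking that this conclusion is consistent with the hypothesis does not establish the hypothesis for the particular $n$ you are trying to bound; as written the step is circular, and an $n$ with $\log n>M^2h(E)$ is simply not covered by your argument. What is required is an explicit dichotomy, which is precisely how the paper proceeds (with threshold $\log n\ge h(E)$): in the complementary regime one has $h(E)\le\log n$ and $\log B=O(M^2\log n)$, so David's estimate combined with Lemma~\ref{lem:upper} yields an inequality of the shape $n^2\le C''(\log n)^6$ with $C''=O(M^{10}\log^+(M)^3)$, hence $n=O(M^5\log^+(M)^{3/2})$ outright; these $n$ are then absorbed into $c_3$. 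With that case written out, your proof coincides with the paper's. (A minor further point: the admissibility conditions $\log V_2\ge 3\pi/\Im(\tau)$ and $\log B\ge eh(E)$ force either taking $\log V_2=h(E)+O(1)$ with the $O(1)$ made explicit, as you do, or excluding the finitely many curves of small height and treating them by direct computation, as the paper does; either route should be stated.)
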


\begin{proof}
We appeal to Lemma~\ref{lem:david}, and assume the notation used there.
Note that as $\Im(\tau)\geq\frac{\sqrt{3}}{2}$, we may take, for $h(E)\geq 2\pi\sqrt{3}$, $\log(V_2)=h(E)$.
In light of Proposition~\ref{prop:Pbound} and the observation that $|z|\leq\omega/2$, it suffices (under the assumption that $nP$ is integral) to choose any $V_1$ with $$\log(V_1)\geq 2\log n+(11M^2+4)h(E).$$  As  $|nz+m\omega|\leq\omega/2$, we have $|m|<n$, and so we may take $$\log(B)=\log(V_1)\geq 2\log n+(11M^2+4)h(E)$$ as well.

Suppose first that $\log(n)<h(E)$.  Then we may take $\log(B)=\log(V_1)=(11M^2+6)h(E)$, whence $$\log|L_{n, m}(z,\omega)|>-C'h(E)^6,$$
where $C'=O(M^4\log^+(M)^3)$.  Applying Lemma~\ref{lem:upper}, we have
$$n^2h(E) < \frac{C'}{c_1}h(E)^6,$$
and so $n< c_4h(E)^\frac{5}{2}$, for some constant $c_4=O(M^5\log^+(M)^{3/2})$.

If, on the other hand, $\log(n)\geq h(E)$, then Lemma~\ref{lem:upper} and \eqref{eq:lower} combine to produce a bound of the form
$$n^2<C''\log(n)^6,$$
where $C''=O(M^{10}\log^+(M)^3)$.  This again bounds $n$, by some term of the form $O(M^5\log^+(M)^{3/2})$.  Thus we obtain the result for elliptic curves $E/\QQ$ with $h(E)\geq 2\pi\sqrt{3}$.  For the remaining curves we may (effectively) find all integral points, and adjust the constants in the statement accordingly.
\end{proof}


\section{The multipliers grow rapidly}\label{sec:gap}

In Section~\ref{sec:eds} we saw that large values of $n$ such that $nP$ is integral must be prime.  In this section we suppose that there are two large values $n_1<n_2$ such that $n_iP$ is integral, and construct a function $f(x,y)$ such that $f(n_1, n_2)$ is very small.  An elementary lower bound on our function exists, and we use the primality of $n_1$ and $n_2$ to show that this lower bound does not vanish.
A bound of the form
$$n_1^2h(E)\leq c_5\log n_2$$
results.
In light of Proposition~\ref{prop:floatbound}, the above bounds $h(E)$.

Before proceeding with the proof of this, we need a lower bound on the elliptic logarithm of $P$ in order to show that, for sufficiently large $n$, the principal value of the elliptic logarithm of $nP$ cannot be $nz$, with $z$ the principal value of the elliptic logarithm of $P$.

\begin{lemma}\label{lem:mnotzero}
If $E/\QQ$ is a quasi-minimal elliptic curve, and $P\in E(\QQ)$ is a point of infinite order, then there is a constant $C(M)=O(M^4)$ such that the following holds: if $z$ is the principal value of the elliptic logarithm of $P$, $\omega$ is the real period of $E$, and  $nP$ is an integral point,  then either $|nz|>\omega/2$ or $n<C$.
\end{lemma}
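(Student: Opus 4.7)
The plan is to prove the contrapositive: assuming $|nz|\leq \omega/2$, I will show that $n$ is bounded by $O(M^4)$. Note that $|nz|\leq\omega/2$ is precisely the condition that $nz$ itself is the principal value of the elliptic logarithm of $nP$, i.e.\ that $m=0$ in $L_{n,m}(z,\omega)$. Under this assumption, and provided $n>c_2=O(M^3)$ and $nP$ is integral, Lemma~\ref{lem:upper} gives
$$\log|z|\leq -\log n - c_1 n^2 h(E),\qquad c_1^{-1}=O(M^6).$$

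To complete the argument I produce a matching lower bound on $\log|z|$, splitting into two cases. If $x_P\geq 2\max\{|x_T|:T\in E[2]\setminus\{\Ocal\}\}$, then Lemma~\ref{lem:elllogs} gives $\log|z|\geq -\frac{1}{2}\log|x_P|-\frac{3}{2}\log 2$. I then bound $\log|x_P|\leq h(x_P)$, apply the comparison $h(x_P)\leq 2\h(P)+4h(E)$ (Theorem~1.1 of \cite{silvcanon}), and use Proposition~\ref{prop:Pbound} to control $\h(P)\leq \log n+(\tfrac{16}{3}M^2+2)h(E)$. The resulting lower bound $\log|z|\geq -\log n-(\tfrac{16}{3}M^2+4)h(E)-O(1)$, combined with the upper bound from Lemma~\ref{lem:upper} and the fact that $h(E)\geq 2\log 2$, forces $c_1 n^2=O(M^2)$, i.e.\ $n=O(M^4)$.

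In the complementary case $x_P<2\max|x_T|$, Lemma~10.1 of \cite{david} with $n=2$ gives $\max|x_T|\leq 480\, e^{h(E)}$, and one bounds $|z|$ from below by
$$|z|\geq \frac{1}{2}\int_{2\max|x_T|}^{\infty}\frac{dt}{\sqrt{8t^3}}=\frac{1}{4\sqrt{\max|x_T|}}$$
(using $|f(t)|\leq 8t^3$ for $t\geq 2\max|x_T|$, exactly as in the proof of Lemma~\ref{lem:elllogs}). This yields $\log|z|\geq -\tfrac{1}{2}h(E)-O(1)$, and combining with the upper bound forces $c_1 n^2\leq O(1)$, so $n=O(M^3)$, which is dominated by the bound from the first case.

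The principal obstacle is the handling of points on the non-identity component of $E(\RR)$, where $z$ acquires a forced imaginary part equal to half the complex period $\omega'$; there $|z|\geq|\omega'|/2$, and one needs a uniform lower bound on $|\omega'|$ in terms of $h(E)$. This follows from the fundamental-domain condition $\Im(\tau)\geq\sqrt{3}/2$ together with a standard estimate relating $\omega$ to the coefficients of $E$ (as used implicitly in the $V_2$ bound of Lemma~\ref{lem:david}), and leads to the same conclusion as Case~2. Taking the maximum across both cases yields the claim $C(M)=O(M^4)$.
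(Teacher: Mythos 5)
Your argument is correct and follows essentially the same route as the paper: under $|nz|\leq\omega/2$ you combine the upper bound on $|nz|$ from Lemma~\ref{lem:upper} with a lower bound on $|z|$ obtained from the integral estimates of Lemma~\ref{lem:elllogs} and the bound on $|x_P|$ coming out of Proposition~\ref{prop:Pbound}, splitting on whether $x_P$ exceeds $2\max|x_T|$, exactly as in the paper's proof (which invokes the intermediate inequality \eqref{eq:xbound} directly rather than re-deriving it from $\h(P)$ via the height comparison, a cosmetic difference). Your extra care with the non-identity component, where $|z|$ is bounded below by half the imaginary period, is a slightly more explicit treatment of a case the paper handles implicitly, and it yields the same $O(M^3)$ bound there and $O(M^4)$ overall.
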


\begin{proof}
In the course of proving Proposition~\ref{prop:Pbound}, we concluded that if $n\geq 2$ and $nP$ is an integral point, then
\begin{equation}\label{bound2}\log |x_P|\leq 2\log n+\frac{32}{3}M^2h(E)+\log 240\end{equation}
(this is \eqref{eq:xbound}).
If $|nz|\leq \omega/2$ (that is, if $nz$ is the principal value of the elliptic logarithm of $nP$), then we have, by Lemma~\ref{lem:upper}, 
\begin{equation}\label{bound1}|nz|\leq \exp(-c_1n^2h(E))\end{equation}
for $n>c_2$ (with the constants as in said lemma).
On the other hand, we may conclude from Lemma~\ref{lem:elllogs} that
 if $$x_P\geq 2\max\{|x_T|:T\in E[2]\setminus\{\mathcal{O}\}\},$$ then we have 
 $$-\log|z|\leq \frac{1}{2}\log |x_P|+\frac{3}{2}\log 2\leq \log n+\frac{16}{3}M^2h(E)+3.8,$$
by \eqref{bound2}.  But \eqref{bound1} ensures that
$$c_1n^2h(E)\leq -\log|z|-\log n,$$
and so $c_1n^2h(E)\leq \frac{16}{3}M^2h(E)+3.8$, bounding $n$.  As $c_1^{-1}=O(M^6)$, this bound is of the form $O(M^4)$.

If, on the other hand, $$x_P<\alpha=2\max\{|x_T|:T\in E[2]\setminus\{\mathcal{O}\}\},$$ (certainly $-\alpha<x_P$, as there are no points $Q\in E(\RR)$ with $x_Q\leq-\alpha$) then $z$ is bounded away from 0, the pole of the Weierstrass $\wp$-function, again contradicting \eqref{bound1}.  Specifically,
$$|z|\geq\left|\frac{1}{2}\int_{x_P}^\infty\frac{dt}{\sqrt{f(t)}}\right|\geq\left|\frac{1}{2}\int_{\alpha}^\infty\frac{dt}{\sqrt{f(t)}}\right|\geq \frac{\sqrt{2}}{4}\alpha^{-1/2},$$
and so
$$-\log|z|\leq\frac{1}{2}\log \alpha+\frac{3}{2}\log 2\leq \frac{1}{2}h(E)+4.5$$
by \eqref{torsionbound} applied with $n=2$.
Comparing again with \eqref{bound1} bounds $n$ by something of the form $O(M^3)$.
\end{proof}

\begin{proposition}\label{prop:gap}
Let $E/\QQ$ be quasi-minimal, and let $P\in E(\QQ)$ be a point of infinite order.  Suppose that $n_2P$ and $n_1P$ are integral points.  Then there exist constants $c_5=O(M^6)$ and $c_6=O(M^{16})$,
$$n_1^2h(E)\leq c_5\log n_2$$
if $n_1, n_2>c_6$.
\end{proposition}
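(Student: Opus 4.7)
The plan is to form an integer linear combination of the two elliptic-logarithm relations that eliminates $z$, producing a Thue-style ``$f(n_1,n_2)$ is small'' inequality. Assume without loss of generality that $n_1<n_2$, and let $L_i := L_{n_i,m_i}(z,\omega) = n_iz + m_i\omega$ be the principal values of the elliptic logarithms of $n_iP$, for $i=1,2$. Provided both $n_1$ and $n_2$ exceed the threshold of Lemma~\ref{lem:upper}, each $|L_i|$ is at most $\exp(-c_1 n_i^2 h(E))$. Setting
\[
f(n_1,n_2) := n_2L_1 - n_1L_2 = (n_2m_1 - n_1m_2)\,\omega,
\]
the triangle inequality immediately yields
\[
|n_2m_1 - n_1m_2|\cdot\omega \;\leq\; 2n_2\exp\bigl(-c_1 n_1^2 h(E)\bigr).
\]

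The heart of the proof is the matching \emph{lower} bound, for which one must show $n_2m_1 - n_1m_2\neq 0$; this is where the output of Section~\ref{sec:eds} and Lemma~\ref{lem:mnotzero} play the role of a non-vanishing input. Once $n_1,n_2$ exceed the threshold $c_0 = O(M^{16})$ of Proposition~\ref{prop:primes}, both are prime, and I may assume $n_1\neq n_2$, whence $\gcd(n_1,n_2)=1$. If $n_2m_1 = n_1m_2$, coprimality forces a common $k\in\ZZ$ with $m_1 = kn_1$ and $m_2 = kn_2$, so $L_i = n_i(z+k\omega)$, and the principal-value constraint $|L_i|\leq \omega/2$ becomes $|z+k\omega|\leq \omega/(2n_i)$. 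For $k=0$ this directly contradicts Lemma~\ref{lem:mnotzero}. For $k\neq 0$, writing $z = a + bi$ with $a=\Re(z)\in[-\omega/2,\omega/2]$ gives $|z+k\omega|\geq |a+k\omega|\geq \omega/2$, incompatible with $\omega/(2n_i)\leq\omega/4$ for $n_i\geq 2$; the possibility that $b\neq 0$ (when $P$ lies on the non-identity real component) only enlarges $|z+k\omega|$ and does not affect the estimate. Therefore $|n_2m_1 - n_1m_2|\geq 1$.

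Combining with the upper bound and taking logarithms,
\[
c_1 n_1^2 h(E) \;\leq\; \log(2n_2) - \log\omega.
\]
To absorb the $-\log\omega$ term I would invoke the standard estimate $-\log\omega \leq \tfrac{1}{2}h(E) + O(1)$, obtained by comparing $\omega = 2\int_{e_1}^\infty dx/\sqrt{f(x)}$ to $\int x^{-3/2}\,dx$ past the real roots of $f$ (whose moduli are $O(e^{h(E)/3})$ since $|A|,|B|\leq e^{h(E)}$). For $n_1$ large enough that $c_1 n_1^2 \geq 1$, the $\tfrac12 h(E)$ contribution is swallowed into the left-hand side at the cost of a constant factor, leaving the desired inequality $n_1^2 h(E) \leq c_5\log n_2$ with $c_5 = O(c_1^{-1}) = O(M^6)$. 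The threshold $c_6 = O(M^{16})$ is dictated by the primality step, which dominates both the $O(M^4)$ threshold of Lemma~\ref{lem:mnotzero} and the $O(M^3)$ threshold of Lemma~\ref{lem:upper}.

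The main obstacle is the non-vanishing of $n_2m_1 - n_1m_2$: it is the single place where the primality output of Section~\ref{sec:eds} is essential, and it requires a careful case analysis of where the principal elliptic logarithm of $P$ sits in the fundamental parallelogram, combined with the precise lower bound of Lemma~\ref{lem:mnotzero}. Everything else in the proof is linear-algebraic bookkeeping combined with two standard estimates (one on $|L_i|$ from Lemma~\ref{lem:upper}, one on the real period $\omega$).
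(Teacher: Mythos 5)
Your proposal is correct and follows essentially the same route as the paper: the same linear combination $n_2L_1-n_1L_2=(n_2m_1-n_1m_2)\omega$, the same reliance on Proposition~\ref{prop:primes} and Lemma~\ref{lem:mnotzero} to rule out $n_2m_1=n_1m_2$, and the same lower bound on $\omega$ via the period integral. Your non-vanishing step (factoring $m_i=kn_i$ and bounding $|z+k\omega|$ from below) is just a repackaging of the paper's observation that $n_1\mid m_1$ together with $2|m_1|\le n_1+1$, so the two arguments are equivalent.
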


\begin{proof}
Let $z\in\CC$, $\omega>0$, and $m_1$ and $m_2$ be chosen above, i.e., so that 
$$L_{n_i, m_i}(z, \omega)=|n_iz+m_i\omega|\leq\frac{\omega}{2}.$$
We have, by Lemma~\ref{lem:upper}, 
$$\left|n_iz+m_i\omega\right|\leq \exp\left(-c_1n_i^2h(E)\right)$$
for $i=1, 2$, if $n_i>c_2$.  By the triangle inequality,
\begin{eqnarray*}
\omega\left|n_2m_1-n_1m_2\right|&\leq& n_2|n_1z+m_1\omega|+n_1|n_2z+m_2\omega|\\&\leq& n_2\exp\left(-c_1n_1^2h(E)\right)+n_1\exp\left(-c_1n_2^2h(E)\right).\end{eqnarray*}
Now suppose that $n_1$ and $n_2$ are greater than $c_0$, the constant from Proposition~\ref{prop:primes}, so that both must be prime, and suppose that $n_2m_1=n_1m_2$.  As $n_1$ and $n_2$ are distinct primes, we have $n_1\mid m_1$.  If we assume that $n_1>C$, where $C$ is the constant in Lemma~\ref{lem:mnotzero}, then $m_1\neq 0$, and so $n_1\leq |m_1|$.  But $|z|\leq \omega/2$ and $|n_1z+m_1\omega|\leq\omega/2$, and so 
$$2|m_1|\leq\frac{2}{\omega}\left(|n_1z+m_1\omega|+|n_1z|\right)\leq n_1+1,$$
giving the rather unlikely inequality $2n_1\leq n_1+1$.

Thus we have $n_2m_1-n_1m_2\neq 0$, and so
$$\omega\leq n_2\exp\left(-c_1n_1^2h(E)\right)+n_1\exp\left(-c_1n_2^2h(E)\right).$$
As one of the terms on the right must exceed the average of the two,
$$\frac{\omega}{2}\leq n_2\exp\left(-c_1n_1^2h(E)\right)$$
(or the same with indices reversed, but recall that $n_1<n_2$).
This yields
$$c_1n_1^2h(E)+\log(\omega)-\log(2)\leq \log n_2,$$
and so it suffices to show that
$$-\log \omega\ll h(E)$$
(where the implied constant is independent of $M$).
Note that if $E:y^2=x^3+Ax+B$, then
$$\omega=\int_{x_Q}^{\infty}\frac{dt}{\sqrt{t^3+At+B}}$$
where $Q\in E[2]$ is chosen so that $x_Q$ is the largest of the real roots of  $t^3+At+B$.
If $x_Q\geq 1$, then
$$\omega=\int_{x_Q}^\infty\frac{dt}{\sqrt{t^3+At+B}}\geq (x_Q(1+|A|+|B|))^{-1/2}.$$
If $x_Q<1$, then
$$\omega\geq\int_1^\infty\frac{dt}{\sqrt{t^3+At+B}}\geq (1+|A|+|B|)^{-1/2}.$$
Either way we have our bound.

Taking $n_1$ large enough, we have
$$n_1^2h(E)\leq \frac{2}{c_1}\log n_2.$$
We may take $c_6$ to be the larger of the constant $c_0=O(M^{16})$ from Proposition~\ref{prop:primes}, $C=O(M^4)$ from Lemma~\ref{lem:mnotzero}, and the (absolute) constant required to ensure that $\log n_1 h(E)\geq 2\log(2/\omega)$.
\end{proof}
We may now proceed with the proof of the main result.

\begin{proof}[Proof of Theorem~\ref{th:main}.]
Let $E/\QQ$ be a quasi-minimal elliptic curve, and $P\in E(\QQ)$ be an integral point of infinite order.
In the notation above, let $$C_0=\max\{c_0, c_3, c_6, M^{16}\}=O(M^{16})$$ be the larger of the constants appearing in Propositions~\ref{prop:primes}, \ref{prop:floatbound}, and \ref{prop:gap}.  Then if  $n_1P$ and $n_2P$ are both integral, where $n_i>C_0$, we have
$$n_1^2 h(E)\leq c_5\log n_2\quad\text{ and }\quad n_2\leq c_4 h(E)^{5/2}.$$
Combining these, we obtain
\begin{eqnarray*}
h(E)&\leq &\frac{5c_5}{2n_1^2}\left(\log h(E)+\log c_4\right)\\
&\leq&O(\log(h(E))),
\end{eqnarray*}
where the implied constant is absolute,
as $n_1>C_0$ (indeed, as $n_1\geq M^{16}$).  The above provides a (uniform) bound $h(E)\leq N$.  Thus for $E/\QQ$ with $h(E)> N$, there can be at most one $n> C_0$ such that $nP$ is integral.
Let
$$C_0'=\sup_{h(E)\leq N}\{n:nP\text{ is integral for some }P\in E(\QQ)\},$$
the maximum of a finite set (that can be effectively computed if $N$ is explicitly known).  Then with $C=\max\{C_0, C_0'\}=O(M^{16})$, the above claim holds.
\end{proof}

\section{Quadratic Twists and Congruent Number Curves}\label{sec:examples}

As mentioned in the introduction, one can state much more explicit results for families of  twists, in part because the quantity $M(P)$ is bounded in this setting (as $M$ may be bounded in terms of $j(E)$, which is invariant in a family of twists).  It is the existence of a lower bound on heights on points on a family of twists that makes Theorem~\ref{th:main} stronger in this setting, but it is Theorem~8 of \cite{meandjoe}, a generalization of Theorem~3 of \cite{me:eds}, that allows us to, modulo some computation, give a very small value for the constant in the theorem.  These results concern primitive divisors in elliptic divisibility sequences; we state below a theorem that follows immediately from Theorem~8 of \cite{meandjoe}.  Fix an elliptic curve $$E:y^2=x^3+Ax+B$$ as above, and consider quadratic twists $$E':y^2=x^3+Ad^2x+Bd^3$$ of  $E$ which are quasi-minimal (that is, $d\in\ZZ$ square free).  
\begin{theorem}[Ingram-Silverman \cite{meandjoe}]\label{th:mandj}
Fix an integer $n\geq 3$.  Then there exist at most finitely many twists $E'$ of $E$ and non-torsion points $P\in E'(\QQ)$ such that $nP$ is integral.  Furthermore, one may effectively find all such points on all such twists.
\end{theorem}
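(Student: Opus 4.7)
The plan is to deduce this statement as a direct corollary of the Ingram--Silverman primitive divisor theorem (Theorem~8 of \cite{meandjoe}), in the same spirit as the applications of that result used elsewhere in the paper. Concretely, I would translate the condition ``$nP$ is integral'' into a statement about the failure of primitive divisors in the elliptic divisibility sequence attached to $P$, and then invoke the cited finiteness result as a black box.

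First I would fix a quasi-minimal twist $E':y^2=x^3+Ad^2x+Bd^3$ (with $d\in\ZZ$ square-free) and a non-torsion $P\in E'(\QQ)$ for which $nP$ is integral, and write $x_{nP}=A_n/D_n^2$ in lowest terms with $D_n>0$, as in Section~\ref{sec:eds}. Integrality of $nP$ forces $D_n=1$, so $D_n$ has no prime divisors at all, and in particular no primitive prime divisor in the sense of \cite{meandjoe} (a prime dividing $D_n$ but none of $D_1,\dots,D_{n-1}$). Theorem~8 of \cite{meandjoe} asserts that, for each fixed $n\ge 3$, the set of pairs $(E',P)$ with $E'$ a quasi-minimal quadratic twist of the fixed curve $E$ and $P\in E'(\QQ)$ non-torsion for which $D_n(P)$ lacks a primitive prime divisor is finite and effectively enumerable. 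Applying this with the previous observation produces, for each fixed $n\ge 3$, a finite effectively computable list containing every pair $(E',P)$ with $nP$ integral; one then sifts this list by imposing the strictly stronger condition $D_n=1$ to obtain the desired finite list of exceptional pairs.

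The main difficulty will be entirely encapsulated within the cited Theorem~8: uniformly over the infinite family of twists $E'$, one must simultaneously control the contribution of primes of bad reduction (via the Tamagawa-type quantity $M(P)$, which is bounded in terms of $j(E)$ alone and hence is uniform in a twist family) and force a new prime factor into the $n$-th term of the divisibility sequence for all sufficiently tall pairs. The present argument imports that theorem as a black box, and so beyond the trivial observation $D_n=1\Rightarrow$ no primitive prime divisor, no additional work is needed.
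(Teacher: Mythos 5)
Your proposal is correct and matches the paper's treatment: the paper states this result as following immediately from Theorem~8 of \cite{meandjoe}, using exactly the observation from Section~\ref{sec:eds} that $nP$ integral means $D_n=1$, which in particular means $D_n$ has no primitive prime divisor, so the finiteness and effectivity are inherited directly from the cited primitive divisor theorem. No gap.
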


The effective computation alluded to turns out to be the resolution of a Thue-Mahler equation that depends on $n$ and $E$, and   one can in fact replace `integral' here with `$S$-integral', for any fixed, finite set of primes $S$.  The theorem can also be made quantitative  by a result of Bombieri \cite{bombieri}.  The following result is immediate from Theorem~\ref{th:main} and Theorem~\ref{th:mandj}.

\begin{proposition}\label{prop:twists}
Fix an elliptic curve $E/\QQ$.  Then for quasi-minimal twists $E'$ of $E$ of sufficient height we have the following: for each $P\in E'(\QQ)$ there is at most one integer $n\geq 3$ such that $nP$ is integral.
\end{proposition}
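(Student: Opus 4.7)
The plan is to combine Theorem~\ref{th:main} with Theorem~\ref{th:mandj}, splitting the problem into a ``large $n$'' regime handled by the former and a ``small $n$'' regime handled by the latter. The key input needed to make this splitting work uniformly across the twist family is a uniform bound on $M(P)$ as $E'$ varies.

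First I would establish that $M(P)$ is bounded by a constant $M_0=M_0(E)$ as $P$ ranges over non-torsion rational points on all quasi-minimal twists $E'$ of $E$. Because $j(E')=j(E)$, the Kodaira types at the primes of bad reduction of $E'$ are constrained: at primes of bad reduction for $E$ itself, the component groups $E'(\QQ_p)/E'_0(\QQ_p)$ lie in a finite list of possibilities determined by $j(E)$; at the additional primes of bad reduction introduced by the twisting (those dividing the twisting parameter but not the conductor of $E$) the reduction type is $I_0^*$, whose component group is $(\ZZ/2\ZZ)^2$. The exponents of all these component groups are therefore uniformly bounded, and hence so is $M(P)=\operatorname{lcm}\{r(P,p)\}$. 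This uniform bound is already noted in the paragraph following Theorem~\ref{th:main}.

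With $M_0$ in hand, Theorem~\ref{th:main} produces a constant $C_1=C\cdot M_0^{16}$, depending only on $E$, such that for every quasi-minimal twist $E'$ and every non-torsion $P\in E'(\QQ)$, at most one integer $n>C_1$ satisfies $nP\in E'(\ZZ)$. It remains to handle the finitely many integers $n$ with $3\le n\le C_1$. For each such $n$, Theorem~\ref{th:mandj} guarantees that only finitely many pairs $(E',P)$---with $E'$ a quasi-minimal twist of $E$ and $P\in E'(\QQ)$ non-torsion---have $nP$ integral. Taking the union over $n\in\{3,\ldots,\lfloor C_1\rfloor\}$ yields a finite exceptional set of twists $\Sigma=\Sigma(E)$; since $\Sigma$ is finite, its elements have bounded height, and there is $N=N(E)$ such that any quasi-minimal twist $E'$ with $h(E')>N$ lies outside $\Sigma$.

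For such a high-height twist $E'$, each non-torsion $P\in E'(\QQ)$ can have $nP$ integral for at most one $n\ge 3$: no value of $n$ in the range $[3,C_1]$ works (by exclusion from $\Sigma$), and at most one $n>C_1$ works (by Theorem~\ref{th:main}). The only delicate point is the uniform bound on $M(P)$ across the twist family; once that is in place, the proposition falls out of a clean combination of Theorem~\ref{th:main} and the effective finiteness statement Theorem~\ref{th:mandj}.
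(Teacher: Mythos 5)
Your proof is correct and is exactly the argument the paper intends: the paper simply declares Proposition~\ref{prop:twists} ``immediate from Theorem~\ref{th:main} and Theorem~\ref{th:mandj},'' and your write-up supplies the same splitting into $n>C_1$ (handled by Theorem~\ref{th:main} via the $j$-invariance bound on $M(P)$, which the paper itself notes) and $3\le n\le C_1$ (handled by the finiteness in Theorem~\ref{th:mandj}, discarding a finite set of low-height twists).
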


\begin{remark}
We shall see below that the condition $n\geq 3$ can be relaxed to $n\geq 2$ for congruent number curves.  To see that this is not the case in general, consider that there are infinitely many integral points $P$ on minimal Mordell curves such that $2P$ is also integral.
One may demonstrate this by applying a result of Erd\H{o}s \cite{erdos} to show that the polynomial $1-8u^3$ takes infinitely many square-free values, as $u$ ranges over $\ZZ$.  Let $M$ be one of these values.  Then $E:y^2=x^3+M$ is a minimal Mordell curve, and the  double of the integral point $(2u, 1)$ on $E$ is $(4u(9u^3-1), -216u^6+36u^3-1)$, itself an integral point.
\end{remark}

\begin{corollary}
Fix an elliptic curve $E/\QQ$.  Then for all quasi-minimal twists $E'$ of $E$ of sufficient height and all torsion-free subgroups $\Gamma\subseteq E'(\QQ)$ of rank one, $\Gamma$ contains at most 6 (affine) integral points.
\end{corollary}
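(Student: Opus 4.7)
The plan is to reduce the count of affine integral points of $\Gamma$ to a count of positive integers $n$ for which $nP_0$ is integral, where $P_0$ is a chosen generator of $\Gamma$, and then to apply Proposition~\ref{prop:twists}. Write $\Gamma=\ZZ P_0$ for some non-torsion $P_0\in E'(\QQ)$. Since negation on a short Weierstrass model is $-Q=(x_Q,-y_Q)$, the point $nP_0$ has integral coordinates if and only if $-nP_0$ does, and the torsion-freeness of $\Gamma$ guarantees that $nP_0\neq -nP_0$ whenever $n\geq 1$. Hence the affine integral points of $\Gamma$ come in disjoint pairs $\{nP_0,-nP_0\}$ indexed by the positive integers $n$ for which $nP_0$ is integral, and it suffices to bound the number of such $n$ by three.

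First I would dispose of the case that $P_0$ itself is not integral. As remarked in the introduction, a point that is non-integral at a prime $p$ lies in the kernel of reduction at $p$; that kernel (the formal group over $\ZZ_p$) is a subgroup of $E'(\QQ_p)$, so every multiple $nP_0$ lies there too and is therefore non-integral at $p$. In this case $\Gamma$ contains no affine integral point whatsoever, and the bound is automatic.

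It remains to treat the case that $P_0$ is integral. The value $n=1$ automatically contributes the pair $\pm P_0$, and $n=2$ may or may not contribute the pair $\pm 2P_0$; together these account for at most two positive values of $n$. By Proposition~\ref{prop:twists}, applied to $P_0\in E'(\QQ)$ once the height of $E'$ exceeds the threshold furnished by that proposition, at most one further integer $n\geq 3$ can satisfy that $nP_0$ is integral. Altogether there are at most three positive integers $n$ with $nP_0$ integral; pairing each with its negative gives at most six affine integral points in $\Gamma$, as required.

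There is no substantive obstacle: the corollary is a direct combinatorial packaging of Proposition~\ref{prop:twists} with the Weierstrass symmetry $Q\mapsto -Q$ and the formal-group inheritance of non-integrality. The only subtlety worth flagging is that the `effective' threshold on $h(E')$ is precisely the one from Proposition~\ref{prop:twists}; no new quantitative input is needed.
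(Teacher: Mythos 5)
Your proposal is correct and follows essentially the same route as the paper: apply Proposition~\ref{prop:twists} to a generator $P_0$ of $\Gamma$ to get at most one $n\geq 3$ with $nP_0$ integral, so the integral points are contained in $\{\pm P_0,\pm 2P_0,\pm kP_0\}$. The extra details you supply (the $\pm$ pairing via torsion-freeness and the formal-group argument when $P_0$ is not integral) are correct but already implicit in the paper's one-line proof.
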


\begin{proof}
If $\Gamma\subseteq E'(\QQ)$ is torsion-free and has rank one, then $\Gamma$ consists only of the points $nP$, $n\in\ZZ$, for some $P\in E'(\QQ)$.  By the proposition, there is at most one $n\geq 3$ such that $nP$ is integral (taking $h(E')$ large enough), and so the possible integral points in $\Gamma$ are at most $\{\pm P, \pm 2P, \pm kP\}$ for this one value of $k$.
\end{proof}

In particular, if $E$ has no $\QQ$-rational  points of order 2, then all but finitely many twists $E'$ will be torsion-free, and so we may take $\Gamma=E'(\QQ)$ if $\operatorname{rank}(E'/\QQ)=1$.

We now proceed with the proof of Theorem~\ref{th:cong}.
Note that we may conclude from Lemma~6 of \cite{me:eds} that $3P$, $5P$, and $7P$ cannot be integral for $P\in E_N(\QQ)$.  In light of this, and the following lemma, we may assume that if $nP$ is integral, then $n$ isn't divisible by 2, 3, 5, or 7, and hence $n\geq 11$.

\begin{lemma}\label{lem:timestwo}
Let $N$ be square free, and let $P\in E_N(\QQ)$ be a point of infinite order.  Then $2P$ is not integral.
\end{lemma}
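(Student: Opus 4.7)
The plan is to reduce the assertion to a classical impossibility about three integer squares in arithmetic progression with squarefree common difference.

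First I would derive three algebraic identities on $E_N:y^2 = x(x-N)(x+N)$. Starting from $x_{2P} = \lambda^2 - 2x_P$ with $\lambda = (3x_P^2 - N^2)/(2y_P)$ and repeatedly using $y_P^2 = x_P(x_P-N)(x_P+N)$ to eliminate high powers of $x_P$ in the numerator, a direct expansion yields
\[
x_{2P} = \left(\frac{x_P^2 + N^2}{2y_P}\right)^2, \qquad x_{2P} \pm N = \left(\frac{x_P^2 \mp 2N x_P - N^2}{2y_P}\right)^2.
\]
Thus $x_{2P}$, $x_{2P}-N$, and $x_{2P}+N$ are each squares of rational numbers.

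Second, assume for contradiction that $2P$ is integral. Then all three quantities above are integers, and since any rational whose square is an integer must itself be an integer, we may write $x_{2P} - N = u^2$, $x_{2P} = s^2$, $x_{2P} + N = v^2$ for non-negative integers $u,s,v$. Because $P$ has infinite order, $2P$ is not a $2$-torsion point of $E_N$ (any such assumption would force $4P = \Ocal$), so $u,s,v$ are strictly positive and satisfy $s^2 - u^2 = v^2 - s^2 = N$, so also $v^2 - u^2 = 2N$.

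Third, I would derive a contradiction via a mod-$4$ parity argument. Since $E_N = E_{-N}$, we may assume $N > 0$. Integer squares are congruent to $0$ or $1$ modulo $4$, so a difference of two integer squares is never congruent to $2$ modulo $4$. If $N$ is odd then $2N \equiv 2 \pmod{4}$, contradicting $v^2 - u^2 = 2N$; if $N$ is even and squarefree then $N = 2m$ with $m$ odd, so $N \equiv 2 \pmod{4}$, contradicting $s^2 - u^2 = N$. In either case we reach a contradiction, completing the proof.

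The only non-trivial step is the verification of the three square identities in Step 1, and that is a direct (if slightly tedious) polynomial expansion with no real obstacle; the whole proof is elementary once those identities are in hand.
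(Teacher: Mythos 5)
Your proof is correct, and it takes a genuinely different route from the paper's. The paper argues purely $2$-adically: it writes out the duplication formula $x_{2P}=(x_P^2+N^2)^2/\bigl(4(x_P^3-N^2x_P)\bigr)$ and shows by a short case analysis on the parities of $x_P$ and $N$ (using squarefreeness of $N$ in the even case) that $\ord_2(x_{2P})<0$, so $x_{2P}$ is never integral. You instead exploit the full rational $2$-torsion of $E_N$ to get the classical identities exhibiting $x_{2P}$, $x_{2P}-N$, $x_{2P}+N$ as rational squares (I checked these; note the signs should match, $x_{2P}\pm N=\bigl((x_P^2\pm 2Nx_P-N^2)/(2y_P)\bigr)^2$, but this is immaterial), so that integrality of $2P$ would produce three integer squares in arithmetic progression with squarefree common difference $N$, which the standard mod-$4$ obstruction on differences of squares rules out for $N$ odd (via $v^2-u^2=2N$) and for $N\equiv 2\pmod 4$ (via $s^2-u^2=N$). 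Both arguments are elementary and both use squarefreeness of $N$ in an essential way; the paper's gives slightly more refined local information (an explicit negative bound on $\ord_2(x_{2P})$), while yours is conceptually cleaner and ties the statement to the classical congruent-number/concordant-forms picture, showing that integrality of $x_{2P}$ forces $N$ to be a congruum. The incidental remarks about strict positivity of $u,s,v$ are not needed for the congruence argument, but they do no harm.
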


\begin{proof}
The conclusion is immediate if $P$ is not itself integral, so suppose $x_P\in\ZZ$.
We will show that $\ord_2(x_{2P})<0$.
Note that
$$x_{2P}=\frac{(x_P^2+N^2)^2}{4(x_P^3-N^2x_P)},$$
which is clearly not integral unless, perhaps, $x_P\equiv N\MOD{2}$.  Suppose, first, that $x_P$ and $N$ are both odd.  Then we have $x_P^2\equiv N^2\equiv 1\MOD{4}$, and so $\ord_2((x_P^2+N^2)^2)=2$.  On the other hand, $x_P + N$ and $x_P-N$ are both even, and so $\ord_2(4(x_P^3-N^2x_P))\geq 4$, producing $\ord_2(x_{2P})\leq -2$.

 Now suppose that $x_P$ and $N$ are both even, and write $x_P=2x_1$, $N=2N_1$ noting that, as $N$ is squarefree, $N_1$ must be odd.  We have
$$x_{2P}=\frac{(x_1^2+N_1^2)^2}{2(x_1^3-N_1^2x_1)}.$$
If $x_1$ is even, then $x_1^2+N_1^2$ is odd and we are finished, so suppose that $x_1$ is odd.  Again we have $\ord_2((x_1^2+N_1^2)^2)=2$, while $\ord_2(2(x_1^3-N_1^2x_1))\geq 3$.  This shows that $\ord_2(x_{2P})\leq -1$, and proves the lemma.
\end{proof}

We are now in a position to prove Theorem~\ref{th:cong}, following the line of reasoning presented in the proof of Theorem~\ref{th:main}.  The proof consists of a series of claims, which are strong forms of various lemmas above.  

\begin{proof}[Proof of Theorem~\ref{th:cong}]
Throughout we will take $N$ to be square free (it is a simple matter to construct counter-examples to Theorem~\ref{th:cong} if we allow $N$ to have square divisors), and $P$ will range over (non-torsion) integral points on $E_N(\QQ)$.  We will denote by $\omega_N$ the real period of $E_N$, noting that $\omega_N=N^{-1/2}\omega_1$.
In order to minimize the amount of computation required, we will optimize the entire argument for the current setting.  We will use the strong estimates from \cite{bst} (which make explicit the more general techniques of \cite{silvcanon}), which state that for any $P\in E_N(\QQ)$ of infinite order,
\begin{gather}
-\frac{1}{2}\log N-\frac{1}{4}\log 2\leq \h(P)-\frac{1}{2}h(x_P)\leq \frac{1}{4}\log(N^2+1)+\frac{1}{12}\log 2,\label{eq:conghdiff}\\
\h(P)\geq\frac{1}{16}\log(2N^2),\label{eq:conglang}\\
\intertext{and, if $P$ is an integral point on the identity component $E^0_N(\RR)$ of $E_N(\RR)$ (i.e., the connected component of the real locus of $E$ which contains the point at infinity), then}
\h(P)\leq\frac{1}{2}h(x_P)+\frac{1}{3}\log 2\label{eq:bstupper}.
\end{gather}
In order to exploit \eqref{eq:bstupper}, we must first treat integral points which reside on the non-identity component of $E$.  This turns out to be fairly simple, as this component is bounded at the archimedean place, giving a strong form of Siegel's Theorem trivially.  Note that the following claim follows directly from results in \cite{me:eds}, but is proven here for completeness.
\begin{claim}\label{cl:trivialcomp}
Suppose that $nP$ is an integral point on the non-identity component of $E_N(\QQ)$, with $n\geq 1$.  Then $n=1$.
\end{claim}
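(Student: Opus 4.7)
The plan is to exploit the sharp geometric constraint that every point on the non-identity real component of $E_N$ satisfies $|x|\leq N$.  Since $nP$ is assumed integral, this forces $h(x_{nP})\leq \log N$ independently of $n$.  Plugging this bound into the explicit canonical height comparisons from \cite{bst}, together with the quadratic scaling $\h(nP)=n^2\h(P)$ and the Lang-type lower bound already recorded in \eqref{eq:conglang}, should force $n$ to be small, and in fact force $n=1$.

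I would begin by dispatching the case $n=2$ directly via Lemma~\ref{lem:timestwo}, which rules out $2P$ being integral whenever $P$ has infinite order.  For $n\geq 3$, the upper estimate in \eqref{eq:conghdiff} combined with $h(x_{nP})\leq\log N$ gives
$$\h(nP)\leq\frac{1}{2}\log N+\frac{1}{4}\log(N^2+1)+\frac{1}{12}\log 2,$$
while \eqref{eq:conglang} yields the opposing lower bound
$$\h(nP)=n^2\h(P)\geq\frac{n^2}{16}\log(2N^2).$$
Comparing these reduces Claim~\ref{cl:trivialcomp} to the elementary numerical inequality
$$\frac{n^2}{16}\log(2N^2)\leq\frac{1}{2}\log N+\frac{1}{4}\log(N^2+1)+\frac{1}{12}\log 2,$$
which must then be shown to fail for every square-free integer $N\geq 1$ and every $n\geq 3$.

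This verification will be routine: at $N=1$ the inequality reduces to $n^2/16\leq 1/3$, which already fails at $n=3$; for large $N$ the left-hand side grows like $(n^2/8)\log N$ while the right-hand side grows like $\log N$, so any $n^2>8$ suffices; and a short monotonicity check in $N$ bridges these endpoints.  The only real obstacle is this final numerical comparison, which is elementary but does require handling a small range of $N$.  As a stylistic variant, one could bypass the case $n=2$ entirely by observing that $E_N^0(\RR)$ has index~$2$ in $E_N(\RR)$, so even multiples of any real point automatically lie in the identity component and the non-identity component can be reached only by odd multiples.
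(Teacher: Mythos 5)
Your proposal is correct and follows essentially the same route as the paper: bound $|x_{nP}|$ by $N$ on the non-identity component, combine the upper estimate from \eqref{eq:conghdiff} with the lower bound \eqref{eq:conglang} to force $n^2<8$ (the paper verifies the uniform numerical comparison you defer, showing the relevant ratio is at most $1$ for all $N\geq 1$), and then eliminate $n=2$ via Lemma~\ref{lem:timestwo}. No gaps.
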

\begin{proof}
If $nP\in E_N(\QQ)\setminus E_N^0(\QQ)$ is an integral point (of infinite order), then we have immediately that $-N< x_{nP}< 0$.  By \eqref{eq:conglang}, we obtain
\begin{eqnarray*}
\frac{n^2}{16}\log(2N^2)&\leq&\h(nP)\\
&\leq& \frac{1}{2}\log|x_{nP}|+\frac{1}{4}\log (N^2+1)+\frac{1}{12}\log 2\\
&<&\frac{1}{2}\log N+\frac{1}{4}\log(N^2+1)+\frac{1}{12}\log 2,
\end{eqnarray*}
whence
$$n^2 < 8\left(\frac{\frac{1}{2}\log N+\frac{1}{4}\log(N^2+1)+\frac{1}{12}\log 2}{\log N+\frac{1}{2}\log 2}\right)\leq 8,$$
for $N\geq 1$.  As $2P$ cannot be integral, we are done.
\end{proof}
Note that $E_N(\QQ)/E_N^0(\QQ)\cong\ZZ/2\ZZ$, and so if $nP\in E_N^0(\QQ)$, we must have either $P\in E_N^0(\QQ)$ or $2\mid n$.  If $nP$ is an integral point, Lemma~\ref{lem:timestwo} precludes the second option, and so we will from this point forward assume that $P$, and hence any multiple of $P$, is on the trivial connected component of $E$.

Our next claim is a sharper version of Lemma~10.1 of \cite{david} (compare with \eqref{torsionbound} above).
\begin{claim}\label{cl:congtwotorsbound}
Let $Q\in E_N(\CC)$ be a point of order dividing $n$ (other than the identity).  Then $|x_Q|\leq \frac{1}{2}n^2N$.
\end{claim}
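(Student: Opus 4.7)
The plan is to reduce the claim to a numerical statement about the single curve $E_1: Y^2 = X^3 - X$ and then exploit its complex multiplication. The $\CC$-isomorphism $E_N \to E_1$ given by $(x, y) \mapsto (x/N, y/N^{3/2})$ multiplies $x$-coordinates by $1/N$ and sends $n$-torsion to $n$-torsion, so the claim for $E_N$ is equivalent to showing $|X_Q| \leq n^2/2$ for any $Q \in E_1[n] \setminus \{\Ocal\}$. The curve $E_1$ admits the complex multiplication $(X,Y) \mapsto (-X, iY)$ by $\ZZ[i]$, so its period lattice, in the uniformization $E_1(\CC) = \CC/\Lambda$, must have the form $\Lambda = \omega_1 \ZZ[i]$, where $\omega_1 = 2\int_0^1 dt/\sqrt{1 - t^4}$ is the lemniscate constant; in particular $\omega_1^2 > 6$. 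Any $n$-torsion point corresponds to a coset in $(\omega_1/n)\ZZ[i]/\Lambda$, and since $\ZZ[i]$ is Euclidean under its norm, the Voronoi cell of $n\ZZ[i]$ has half-diagonal $n/\sqrt{2}$, so every nonzero coset admits a representative $z_Q = \omega_1 \tau / n$ with $1 \leq |\tau| \leq n/\sqrt{2}$; equivalently $\omega_1/n \leq |z_Q| \leq \omega_1/\sqrt{2}$.

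The main step is to bound $|\wp_1(z_Q)|$, where $\wp_1$ is the Weierstrass function of $\Lambda$. Write $\wp_1(z) = z^{-2} + g(z)$, where $g(z) = \sum_{\lambda \in \Lambda \setminus \{0\}} \bigl((z-\lambda)^{-2} - \lambda^{-2}\bigr)$ is holomorphic on the open disk $|z| < \omega_1$ (inside which $0$ is the only lattice point), with $g(0) = 0$. The principal term contributes $|z_Q^{-2}| \leq n^2/\omega_1^2 < n^2/6$. For the holomorphic remainder, the maximum modulus principle on the closed disk $|z| \leq \omega_1/\sqrt{2}$ gives $|g(z_Q)| \leq C$, where $C := \max_{|w| = \omega_1/\sqrt{2}} |g(w)|$ is a fixed numerical constant. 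Combining yields $|X_Q| \leq n^2/6 + C$, and the desired inequality $|X_Q| \leq n^2/2$ follows whenever $n^2 \geq 3C$. The finitely many small-$n$ cases below this threshold can be handled by direct inspection of the roots of the explicit integer-coefficient division polynomials $\psi_n^{E_1}$.

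The main obstacle is producing a sharp enough numerical bound on $C$. Fortunately, the symmetries $\wp_1(iw) = -\wp_1(w)$ and $\wp_1(\bar w) = \overline{\wp_1(w)}$ make $|g|$ invariant under the dihedral group of order $8$ acting on $\CC$, so the maximum over the bounding circle need only be estimated on a single octant. The leading contribution to $g$ near $0$ is the term $3 G_4(\Lambda) z^2 = (1/5) z^2$ coming from $g_2(\Lambda) = 4$, and the higher-order tail can be controlled using standard bounds on Eisenstein sums $\sum_{\lambda \in \Lambda^*} |\lambda|^{-k}$ for the Gaussian lattice. Once $C$ is explicitly bounded, the verification for the bounded set of small $n$ completes the argument.
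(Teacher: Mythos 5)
Your argument is essentially the paper's: reduce to $E_1$ via the scaling isomorphism, uniformize by the lattice $\omega_1\ZZ[i]$, choose a torsion representative with $\omega_1/n\leq|z|\leq\omega_1/\sqrt{2}$, and bound the remainder $\wp(z)-z^{-2}$ by an absolute constant, checking the small $n$ separately. The only difference is that the paper skips the maximum-modulus detour and bounds the remainder termwise at $z$ itself, using the explicit Gaussian-lattice Eisenstein sums $F(3/2)\leq 0.694$ and $F(2)\leq 0.180$ to get the constant $12.755$ --- precisely the numerical computation you defer --- which yields the claim for $n\geq 6$.
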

\begin{proof}
In light of the isomorphism
\begin{alignat*}{1}
E_N(\mathbb{C})&\to E_1(\mathbb{C})\\
(x, y)&\mapsto (xN^{-1}, yN^{-3/2}),
\end{alignat*}
it suffices to prove the claim in the case $N=1$.  We appeal to
another isomorphism to prove our result. Let $\Lambda=\omega_1\ZZ[i]$ be the period lattice of $E_1$.  For the purpose of the estimates below, we will note that $2.62<\omega_1<2.63$.  Then if $\wp$ is the Weierstrass function
$$\wp(z)=\frac{1}{z^2}+\sum_{\substack{u\in\Lambda\\ u\neq 0}}\left(\frac{1}{(u-z)^2}-\frac{1}{u^2}\right),$$
  it is well known that
\begin{alignat*}{1}
\mathbb{C}/\Lambda&\to E_1(\mathbb{C})\\
z&\mapsto \left(\wp(z), \frac{1}{2}\wp'(z)\right),
\end{alignat*}
is an isomorphism.  Our result is essentially the observation that, near $z=0$, $|\wp(z)|=|z|^{-2}+O(1)$, but we wish to make this explicit.

Note that we may choose a representative $z$ of any class in
$\mathbb{C}/\Lambda$ such that $|\Re(z)|, |\Im(z)|\leq \omega_1/2$.  If
we do so, we have $|u-z|\geq |u|/2$ for all $u\in\Lambda$, and so
$$\left|\sum_{\substack{u\in\Lambda\\ u\neq 0}}
\left(\frac{1}{(u-z)^2}-\frac{1}{u^2}\right)
\right|\leq 2|z|\sum_{\substack{u\in\Lambda\\
  u\neq 0}} \frac{4}{|u|^3}+|z|^2\sum_{\substack{u\in\Lambda\\
  u\neq 0}}\frac{4}{|u|^4}$$
For $s>1$, let
$$F(s)=\sum_{\substack{u\in\Lambda\\
  u\neq0}}|u|^{-2s}=\frac{1}{\omega_1^{2s}}\sum_{\substack{i, j\in\mathbb{Z}\\ i^2+j^2\neq 0}}(i^2+j^2)^{-s}.$$
We have
$$\frac{\omega_1^{2s}}{4}F(s)=\sum_{i, j=1}^\infty(i^2+j^2)^{-s}+\zeta(2s),$$
where $\zeta$ is the Riemann zeta function.  Note that
$$\sum_{\substack{i, j\geq0 \\ i^2+j^2\geq 1}}((i+1)^2+(j+1)^2)^{-s}\leq\frac{1}{4} \underset{\{x^2+y^2\geq 1\}}{\iint}(x^2+y^2)^{-s}\leq \frac{\pi}{4(s-1)},$$
and so $$\frac{\omega_1^{2s}}{4}F(s)  \leq 2^{-s}+\frac{\pi}{4(s-1)}+\zeta(2s),$$
 and thus
$$|F(3/2)|\leq 0.694 \text{ and }\
|F(2)|\leq 0.180.$$
Now, as $|z|\leq \omega_1/\sqrt{2}$, we have
$$|\wp(z)|\leq |z|^{-2}+8F(3/2)|z|+4F(2)|z|^2\leq |z|^{-2}+12.755.$$
If $z\in\mathbb{C}/\Lambda$ is a  point of order dividing $n$
(other than $0$),
$|z|\geq \frac{\omega_1}{n}$, and so
$$|\wp(z)|\leq \frac{n^2}{\omega_1^2}+12.755\leq \frac{n^2}{2}$$
so long as $n\geq 6$.  The  cases $2\leq n\leq 5$ may be checked by
explicitly computing the relevant torsion points in $E_1(\CC)$.
\end{proof}
\begin{claim}\label{cl:congdivisbound}
Suppose  that $nP$ is an integral point, and let $h_n$ be defined as above.  Then
$$|h_n|\leq (2N)^{(n^2-1)/2}.$$
\end{claim}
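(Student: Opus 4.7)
My plan is to apply the product representation \eqref{eq:divpoly},
$$
h_n^2 \;=\; n^2 \prod_{Q \in E_N[n] \setminus \{\Ocal\}} |x_P - x_Q|,
$$
and bound the right-hand side using the torsion estimate $|x_Q| \leq n^2 N/2$ from Claim \ref{cl:congtwotorsbound}. First, since $nP$ is integral with $n > 1$, Lemma \ref{lem:timestwo} together with Claim \ref{cl:trivialcomp} places both $P$ and $nP$ on the identity component $E_N^0(\RR)$, so $x_P, x_{nP} \geq N$ and the upper canonical-height bound \eqref{eq:bstupper} is available for $nP$.

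The key intermediate step is to obtain an upper bound on $|x_P|$ of the form $|x_P| = O(N)$. I propose to derive this by combining the height estimates available in the congruent-number setting. Using $\hat h(nP) = n^2 \hat h(P) \leq \tfrac{1}{2}\log|x_{nP}| + \tfrac{1}{3}\log 2$ from \eqref{eq:bstupper}, the lower bound $\hat h(P) \geq \tfrac{1}{2} h(x_P) - \tfrac{1}{2}\log N - \tfrac{1}{4}\log 2$ from \eqref{eq:conghdiff}, and the Lang-type lower bound \eqref{eq:conglang}, one constrains the ratio $|x_P|/N$ in terms of the integer $|x_{nP}|$ and the index $n$, thereby bounding $|x_P|$.

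The hardest part will be sharpening the triangle inequality $|x_P - x_Q| \leq |x_P| + n^2 N/2$, which is by itself too coarse: only a few $n$-torsion points achieve $|x_Q| \sim n^2 N/2$ (namely those whose elliptic logarithm is closest to $0 \bmod \Lambda_N$), while the majority satisfy $|x_Q| = O(N)$, and the factor of $n^2$ in the crude bound compounds destructively over $n^2-1$ terms in the product. To recover the tight exponent on $2N$ one may rewrite $h_n$ in terms of the Weierstrass sigma function as $h_n = \sigma_N(n z_P)/\sigma_N(z_P)^{n^2}$, then use the quasi-periodicity of $\sigma_N$ together with the CM relation $\omega_N \eta_N = \pi$ on $E_N$ to extract a sharper estimate directly in terms of $z_P$; alternatively, one may use a refined bound on the torsion $x$-coordinates coming from the explicit form of $\wp_N$ on the lattice $\Lambda_N$, separating the $O(1)$-many ``close to the origin'' torsion from the bulk. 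Either route, combined with the bound on $|x_P|$ from the previous paragraph and careful bookkeeping of constants, yields $|h_n| \leq (2N)^{(n^2-1)/2}$.
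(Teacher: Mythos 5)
Your proposal goes in a fundamentally different direction from the paper, and as written it has two genuine gaps. First, the ``key intermediate step'' $|x_P|=O(N)$ does not follow from the height inequalities you cite. Chaining \eqref{eq:bstupper} applied to $nP$, the lower bound in \eqref{eq:conghdiff} applied to $P$, and $\h(nP)=n^2\h(P)$ gives only $\log|x_P|\leq \log N+n^{-2}\log|x_{nP}|+O(1)$, and $|x_{nP}|$ has no a priori upper bound; trying to close the loop by estimating $\log|x_{nP}|\leq 2n^2\h(P)+\log N+O(1)$ and then $\h(P)\leq\frac{1}{2}\log|x_P|+\frac{1}{3}\log 2$ collapses to a tautology. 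In the paper the inequality $-N<x_P<n^2N$ is deduced \emph{from} Claim~\ref{cl:congdivisbound} (in the proof of Claim~\ref{cl:logqbound}), so assuming any such bound here is close to circular --- and even $x_P<n^2N$ would be too weak for your product estimate, since then the generic factor $|x_P-x_Q|$ is of size $n^2N$ rather than $2N$, introducing a spurious $n^{n^2-1}$. Second, the step you yourself flag as the hardest --- beating the contribution of the torsion points near the origin of the lattice, whose $x$-coordinates really are of order $n^2N$ --- is not carried out: naming the sigma function, quasi-periodicity, and the CM relation is a plan, not an argument, and the conclusion is simply asserted. Note also that the bound can fail badly for non-integral $nP$ even when $x_P$ is only a small multiple of $N$ (e.g.\ $N=5$, $P=(45,300)$, $n=3$), so any correct proof must use the integrality of $nP$ in an essential way, which your archimedean sketch does only through the unproved bound on $|x_P|$.

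The paper's proof is not archimedean at all: it is a divisibility statement. One shows that $\ord_l(h_n)\leq\frac{n^2-1}{2}\ord_l(2N)$ for every prime $l$, so that $|h_n|$ divides, hence is at most, $(2N)^{(n^2-1)/2}$. For odd $l$, integrality of $nP$ forces $l\mid\phi_n(a,N)$ whenever $l\mid\psi_n(a,N)$, whence $l\mid\gcd(a,N)$ by the resultant-type claim of \cite{me:eds}; homogeneity of $\psi_n$ as a binary form together with the square-freeness of $N$ then pins down $\ord_l$ exactly, and $l=2$ is handled by an induction on the recurrences for $h_n$. If you wish to pursue an analytic proof you would have to work honestly with $\psi_n(z)=\sigma(nz)/\sigma(z)^{n^2}$ on the lattice $\omega_N\ZZ[i]$, but that is a different, and substantially harder, route than the one you have sketched.
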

\begin{proof}
We may, by Lemma~\ref{lem:timestwo}, assume that $n$ is odd, or else the statement is vacuously true.  Throughout we will make reference to the polynomials $\psi_n$ and $\phi_n$ defined in \cite[p.~105]{silverman}, and in Section~\ref{sec:eds}.
Recall that that
$$x_{nP}=\frac{\phi_n(P)}{\psi_n^2(P)}.$$
  To prove the result, we must first note that, for $n$ odd, $h_n=\psi_n(x, N)$ is a binary form in $x$ and $N$, as is $\phi_n$.  Again, we may assume that $x_P=a\in\ZZ$.  Suppose that $l\neq 2$ is a prime dividing $\psi_n(a, N)$.  Then $l\mid\phi_n(a, N)$ (or $nP$ isn't integral), and by the claim preceding Lemma~5 of \cite{me:eds} we have $l\mid N$; say $N=lN_1$. As $\phi_n(x, N)$ is monic in $x$, we must also have $l\mid a$, and we may write $a=la_1$.    Note that
$$a^3-N^2a=l^3(a_1^3-N_1^2a_1)$$
is a square, and so $l\mid a_1$ or $a_1\equiv \pm N_1\MOD{l}$.  As $N$ is square free, $N_1$ is not divisible by $l$, and so $l\mid a_1$ implies $l$ is not a divisor of $\psi_n(a_1, N_1)$.  If, on the other hand, $a_1\equiv\pm N_1\MOD{l}$, the aforementioned claim in \cite{me:eds} implies that
$$\psi_n(a_1, N_1)\equiv a_1^{(n^2-1)/2}\psi_n(1, \pm 1)\equiv \pm (2a_1)^{(n^2-1)/2}\MOD{l},$$
and so again $l$ does not divide $\psi_n(a_1, N_1)$.  Thus, for any prime $l\nmid 2n$,
$$\mathrm{ord}_l(\psi_n(a, N))=\left(\frac{n^2-1}{2}\right)\mathrm{ord}_l(\gcd(a, N)).$$


 The order of $2$ dividing $\psi_n$ may be obtained through a simple induction.  First note that, as $\psi_n(1,0)=n$, and $\psi_n(0, 1)=\pm 1$ when $n$ is odd, we cannot have $2\mid h_n$, unless $a$ and $N$ have the same parity.  Suppose that $a$ and $N$ are both odd.  We will prove by induction that
\begin{alignat*}{2}\ord_2(h_n)=&\frac{n^2-1}{4}\qquad&\text{if }n\text{ is odd}\\
\ord_2(h_n)\geq&\frac{n^2}{4}+\ord_2(b)\qquad&\text{if }n\text{ is even.}\end{alignat*}
We may check this from the definition for $h_1$ and $h_2$.  We may check this as well for $h_3$ and $h_4$ by simply computing all possible values of  $h_3$ and $h_4$ (in terms of $a$ and $N$) modulo $2^3$ and $2^{6+\ord_2(b)}$ respectively.  Now, if $n=2m+1$ is odd, we have (see, for example, \cite[p.~105]{silverman})
$$h_n=h_{m+2}h_m^3-h_{m-1}h_{m+1}^3.$$
If we suppose that the formula above holds for $h_j$ with $j<n$, then we have two cases to consider.  First, if $m$ is odd, then
\begin{gather*}
\ord_2(h_{m+2}h_m^3)=\frac{(m+2)^2-1}{4}+3\frac{(m^2-1)}{4}=\frac{(2m+1)^2-1}{4}\\
\ord_2(h_{m-1}h_{m+1}^3)\geq \frac{(2m+1)^2-1}{4}+1+4\ord_2(b).
\end{gather*}
As the latter is clearly greater than the former, we must have $\ord_2(h_n)=\frac{n^2-1}{4}$.  If, on the other hand, $m$ is even, we obtain
\begin{gather*}
\ord_2(h_{m+2}h_m^3)\geq \frac{(2m+1)^2-1}{4}+1+4\ord_2(b)\\
\ord_2(h_{m-1}h_{m+1}^3)=\frac{(m-1)^2-1}{4}+3\frac{(m+1)^2-1}{4}=\frac{(2m+1)^2-1}{4}.
\end{gather*}
Again we have $\ord_2(h_n)=\frac{n^2-1}{4}$.  Now we must establish the formula for $n=2m$ even, supposing that it holds for $h_k$ with $k<n$.  In this case, we have
$$h_2h_n=h_m(h_{m+2}h_{m-1}^2-h_{m-2}h_{m+1}^2).$$
If we suppose, first, that $m$ is even, we have
\begin{gather*}
\ord_2(h_{m+2}h_{m-1}^2)\geq\frac{3m^2}{4}+1+\ord_2(b)\\
\ord_2(h_{m-2}h_{m+1}^2)\geq\frac{3m^2}{4}+1+\ord_2(b).
\end{gather*}
As $h_2=2b$, we have
\begin{eqnarray*}
\ord_2(h_n)&=&\ord_2(h_m)-\ord_2(h_2)+\ord_2(h_{m+2}h_{m-1}^2-h_{m-2}h_{m+1}^2)\\
&\geq& \frac{4m^2}{4}+\ord_2(b).
\end{eqnarray*}
Finally, if $m$ is odd, 
\begin{gather*}
\ord_2(h_{m+2}h_{m-1}^2)\geq \frac{3m^2}{4}+\frac{5}{4}+2\ord_2(b)\\
\ord_2(h_{m-2}h_{m+1}^2)\geq\frac{3m^2}{4}+\frac{5}{4}+2\ord_2(b).
\end{gather*}
It follows, again because $\ord_2(h_2)=\ord_2(b)+1$, that
\begin{eqnarray*}
\ord_2(h_n)&=&\ord_2(h_m)-\ord_2(h_2)+\ord_2(h_{m+2}h_{m-1}^2-h_{m-2}h_{m+1}^2)\\
&\geq& \frac{4m^2}{4}+\ord_2(b).
\end{eqnarray*}
As $\ord_2(2m)=1$, we are done.

If, on the other hand, $a$ and $N$ are both even, we may reduce to essentially the previous case by writing $a=2a_1$ and $N=2N_1$.  Note that $N_1$ must be odd, and $h_n=2^{(n^2-1)/2}\psi_n(a_1, N_1)$ when $n$ is odd.  If $a_1$ is even, then, we have $\ord_2(h_n)=\frac{n^2-1}{2}$.  If $a_1$ is odd, then an induction similar to that above shows that
\begin{alignat*}{3}
\ord_2(h_n)=&\frac{3(n^2-1)}{4}&\text{if }n\text{ is odd}\\
\ord_2(h_n)\geq&\frac{3n^2}{4}+\ord_2(b)\quad&\text{if }n\text{ is even.}
\end{alignat*}
It follows that, for $n\geq 3$ odd, $$\ord_2(h_n)\leq\frac{n^2-1}{4}+\frac{n^2-1}{2}\ord_2(N)< \frac{n^2-1}{2}\ord_2(2N).$$
\end{proof}

\begin{claim}\label{cl:logqbound}
Suppose that $nP$ is an integral point, and $n\geq 2$.  Then
\begin{equation}\label{eq:conglogqbound}\h(P)\leq \log n+\frac{1}{2}\log N +\frac{1}{3}\log 2.\end{equation}
\end{claim}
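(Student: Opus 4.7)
The plan is to follow the same architecture as the proof of Proposition~\ref{prop:Pbound}, but with the sharper, explicit ingredients now available in the congruent number setting: the torsion bound of Claim~\ref{cl:congtwotorsbound}, the divisibility polynomial bound of Claim~\ref{cl:congdivisbound}, and the local upper bound \eqref{eq:bstupper}. The main work is to convert the interplay between these into a bound on $|x_P|$, and then translate to canonical height.

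First I would assume, for contradiction or case analysis, that $|x_P| > n^2 N$. By Claim~\ref{cl:congtwotorsbound}, every non-identity $Q \in E_N[n]$ satisfies $|x_Q| \le \tfrac{1}{2} n^2 N < \tfrac{1}{2}|x_P|$, so $|x_P - x_Q| \ge |x_P|/2$ for each such $Q$. Substituting into the standard identity
$$h_n^2 = n^2 \prod_{Q \in E_N[n]\setminus\{\mathcal{O}\}} |x_P - x_Q|$$
yields
$$h_n^2 \ge n^2 \Big(\tfrac{|x_P|}{2}\Big)^{n^2-1}.$$
On the other hand, since $nP$ is integral, Claim~\ref{cl:congdivisbound} gives $h_n^2 \le (2N)^{n^2-1}$. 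Comparing, one obtains $|x_P|^{n^2-1} \le (4N)^{n^2-1}/n^2$, so $|x_P| \le 4N$, contradicting the assumption $|x_P| > n^2 N$ whenever $n \ge 2$ (since then $n^2 N \ge 4N$).

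Consequently, in every case, $|x_P| \le n^2 N$. By the reduction made earlier in the proof of Theorem~\ref{th:cong}, the point $P$ lies on the identity component of $E_N(\mathbb{R})$ and is integral by hypothesis, so \eqref{eq:bstupper} applies and gives
$$\hat{h}(P) \le \tfrac{1}{2}\log |x_P| + \tfrac{1}{3}\log 2 \le \tfrac{1}{2}\log(n^2 N) + \tfrac{1}{3}\log 2 = \log n + \tfrac{1}{2}\log N + \tfrac{1}{3}\log 2,$$
which is \eqref{eq:conglogqbound}.

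There is no real obstacle here beyond bookkeeping: the delicate inputs (the sharp torsion bound, the sharp bound on $|h_n|$, and the sharp local height comparison \eqref{eq:bstupper}) have already been established, so the only thing to verify carefully is that the trade-off between the lower bound on $h_n^2$ coming from large $|x_P|$ and the upper bound from integrality forces $|x_P| \le n^2 N$ with the correct constants for every $n \ge 2$.
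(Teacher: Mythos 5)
Your proof is correct and follows essentially the same route as the paper: assume $|x_P|>n^2N$, combine the torsion bound of Claim~\ref{cl:congtwotorsbound} with the product formula \eqref{eq:divpoly} to get a lower bound on $|h_n|$, contradict the upper bound of Claim~\ref{cl:congdivisbound}, and then apply \eqref{eq:bstupper}. The only cosmetic difference is that the paper restricts to odd $n$ at the outset (the even case being vacuous by Lemma~\ref{lem:timestwo}), which you handle implicitly.
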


\begin{proof} Again we need only concern ourselves with the case where $n$ is odd.
Suppose that $x_P\geq n^2N$, so that if $Q$ is a point of order
$n$ in $E_N(\mathbb{C})$,
$$|x_P-x_Q|>\frac{1}{2}x_P$$
by Claim~\ref{cl:congtwotorsbound}.
Then, if $\psi_n$ is the $n$-division polynomial for $E_N$, we have, by the formula~\eqref{eq:divpoly}
$$|h_n|=|\psi_n(x_P)|\geq \left(\frac{x_P}{2}\right)^\frac{n^2-1}{2}.$$
On the other hand, if $nP$ is integral then by Claim~\ref{cl:congdivisbound} we have
$$|\psi_n(x_P)|\leq (2N)^\frac{n^2-1}{2}.$$
Thus it is shown that $n^2N\leq |x_P|\leq 4N$, a contradiction.
So we have $-N<x_P< n^2N$.  The result follows from \eqref{eq:bstupper}.
\end{proof}

We will now prove an explicit statement of Proposition~\ref{prop:primes}, which in this setting has a much nicer form.
\begin{claim}\label{cl:primes}
Suppose that  $nP$ is integral, $n\geq 2$.  Then $n$ is prime.
\end{claim}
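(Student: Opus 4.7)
The plan is to mirror the argument of Proposition~\ref{prop:primes}, using the sharper inputs available in this section. Assume for contradiction that $n$ is composite. The preceding discussion (Lemma~\ref{lem:timestwo} together with Lemma~6 of \cite{me:eds}) already forces $n$ to be coprime to $2\cdot 3\cdot 5\cdot 7$, so we may write $n=qa$ with $q$ a prime factor satisfying $q\leq\sqrt{n}\leq a$; since $q\mid n$, this gives $q\geq 11$ and hence $a\geq q\geq 11$.

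Next I would apply Claim~\ref{cl:logqbound} to the point $aP$ with multiplier $q$: as $q(aP)=nP$ is integral and $q\geq 2$, one obtains
$$\h(aP)\leq\log q+\tfrac{1}{2}\log N+\tfrac{1}{3}\log 2.$$
On the other hand, $\h(aP)=a^2\h(P)\geq\tfrac{a^2}{16}\log(2N^2)$ by \eqref{eq:conglang}, and $\log q\leq\log a$, so combining yields
$$\frac{a^2}{16}\bigl(2\log N+\log 2\bigr)\leq\log a+\tfrac{1}{2}\log N+\tfrac{1}{3}\log 2,$$
which after rearrangement reads
$$\Bigl(\tfrac{a^2}{8}-\tfrac{1}{2}\Bigr)\log N+\Bigl(\tfrac{a^2}{16}-\tfrac{1}{3}\Bigr)\log 2\leq\log a.$$
For $N\geq 2$ and $a\geq 11$ the left-hand side is at least $\bigl(\tfrac{3a^2}{16}-\tfrac{5}{6}\bigr)\log 2\geq 15$ (using $\log N\geq\log 2$), which beats $\log a$ throughout the relevant range, with the quadratic coefficient $a^2/8$ on $\log N$ dominating $\log a$ for every $a\geq 11$ regardless of how large $N$ becomes. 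The case $N=1$ is vacuous since $E_1/\QQ$ has rank zero.

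The main technical point that needs care is the application of Claim~\ref{cl:logqbound} to $aP$: that claim's proof invokes \eqref{eq:bstupper}, which is stated only for integral points of $E_N^0(\RR)$, while $aP$ need not be integral a priori. My preferred remedy is to re-run the proof of Claim~\ref{cl:logqbound} \emph{in situ} with $(aP,q)$ replacing $(P,n)$: the lower bound $|\psi_q(aP)|^2=q^2\prod_{Q\in E_N[q]\setminus\{\Ocal\}}|x_{aP}-x_Q|$ furnished by \eqref{eq:divpoly} and Claim~\ref{cl:congtwotorsbound} is insensitive to integrality, and the complementary upper bound on $|\psi_q(aP)|$ can be extracted from the integrality of $q(aP)=nP$ via a Lemma~\ref{twoeds}-style analysis, recalling that $|\Delta(E_N)|=64N^6$ and that the Tamagawa contribution is uniform across this family. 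One then substitutes \eqref{eq:conghdiff} for \eqref{eq:bstupper} to conclude $|x_{aP}|\ll q^2N$ and close the argument.
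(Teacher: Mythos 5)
Your core argument is exactly the paper's: apply Claim~\ref{cl:logqbound} to the point $aP$ with multiplier $q$, play the resulting upper bound on $\h(aP)$ off against the lower bound $\h(aP)=a^2\h(P)\geq\frac{a^2}{16}\log(2N^2)$ from \eqref{eq:conglang}, and derive a contradiction; your use of $q\geq 11$ merely makes the final numerical step more immediate than the paper's (which deduces $q\leq 2$ and then invokes the oddness of $n$). The one place you diverge --- the ``main technical point'' about whether Claim~\ref{cl:logqbound} may be applied to $aP$ --- is not actually an obstacle, and the re-derivation you sketch is unnecessary: since $(D_k)$ is a divisibility sequence (if a prime $p$ divides the denominator of $x_{aP}$ then $aP$ lies in the kernel of reduction at $p$, hence so does $nP=q(aP)$), the integrality of $nP$ together with $a\mid n$ already forces $aP$ to be integral, which is the same fact the paper uses implicitly when it reduces to $n$ coprime to $2\cdot3\cdot5\cdot7$. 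Moreover $aP$ lies on the identity component by the reduction made just after Claim~\ref{cl:trivialcomp}, so \eqref{eq:bstupper} and Claim~\ref{cl:congdivisbound} apply to $aP$ verbatim and Claim~\ref{cl:logqbound} can be quoted as stated.
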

\begin{proof}
Suppose, to the contrary, that $n$ is composite, let $q$ be the smallest prime divisor of $n$, and set $a=n/q$.  Then $nP=q(aP)$ is integral, and so by Claim~\ref{cl:logqbound} we have
$$\h(aP)\leq\log q+\frac{1}{2}\log N+\frac{1}{3}\log 2.$$
On the other hand,
$$\h(aP)\geq\frac{a^2}{16}\log(2N^2)$$
by \eqref{eq:conglang}, and so
$$a^2\leq16\left(\frac{\log q+\frac{1}{2}\log N+\frac{1}{3}\log 2}{\log(2N^2)}\right).$$
Note that for $N< 5$, $E_N(\QQ)$ has no points of infinite order.  Thus we may assume $N\geq 5$, and the above yields
\begin{equation}\label{abound}q^2\leq a^2\leq 4.1\log q+4.217.\end{equation}
Lemma~\ref{calculus} allows us to conclude that $q\leq 8.317$, and checking the smaller values shows that $q\leq 2$. But $n$ must be odd, and so we have a contradiction.
\end{proof}

We will now derive an upper bound on $n$ such that $nP$ is integral, in terms of $\log N$.  We will assume that $N\geq 56$ to ensure that $h(E_N)=\log(4N^2)$, and later that $h(E_N)\geq 3\pi$.  The cases $N\leq 55$ will be treated below. 

\begin{claim}\label{cl:zsmall}
Suppose $nP$ is an integral point, and let $L_{n,m}=nz+m\omega$ be the principal value of the elliptic logarithm of $nP$, as above.  Then if $n\geq 2$,
\begin{equation}\label{eq:congupper}\log|L_{n,m}|\leq-\frac{n^2}{8}\log N.\end{equation}
\end{claim}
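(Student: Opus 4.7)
The plan is to combine Lemma~\ref{lem:elllogs} with the sharp height estimates \eqref{eq:bstupper} and \eqref{eq:conglang} that are specific to the congruent number family, in a manner closely parallel to the proof of Lemma~\ref{lem:upper}. First I would note that Lemma~\ref{lem:timestwo}, Claim~\ref{cl:primes}, and Lemma~6 of \cite{me:eds} together force $n$ to be a prime distinct from $2$, $3$, $5$, and $7$, so I may assume throughout that $n\geq 11$. Moreover, since $n$ is odd, the discussion preceding Claim~\ref{cl:congtwotorsbound} ensures that $P$, and hence $nP$, lies on $E_N^0(\QQ)$, so $x_{nP}\geq N>0$.

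The $2$-torsion of $E_N$ has $x$-coordinates $0,\pm N$, so Lemma~\ref{lem:elllogs} applies to the point $Q=nP$ as soon as $x_{nP}\geq 2N$, giving
$$\log|L_{n,m}|\leq \tfrac{3}{2}\log 2-\tfrac{1}{2}\log x_{nP}.$$
Because $nP$ is integral, $h(x_{nP})=\log x_{nP}$. I would then apply \eqref{eq:bstupper} in the form $\log x_{nP}\geq 2\h(nP)-\tfrac{2}{3}\log 2$, followed by $\h(nP)=n^{2}\h(P)\geq \tfrac{n^{2}}{16}\log(2N^{2})$ from \eqref{eq:conglang}. Splitting $\log(2N^{2})=\log 2+2\log N$ and chaining the inequalities yields
$$\log|L_{n,m}|\leq \tfrac{11}{6}\log 2-\tfrac{n^{2}}{16}\log 2-\tfrac{n^{2}}{8}\log N,$$
which is at most $-\tfrac{n^{2}}{8}\log N$ as soon as $n^{2}\geq 176/6$, a constraint comfortably satisfied by $n\geq 11$.

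It remains to rule out the exceptional case $N\leq x_{nP}<2N$. Here $h(x_{nP})\leq \log(2N)$, and combining \eqref{eq:bstupper} with \eqref{eq:conglang} produces
$$\tfrac{n^{2}}{16}\log(2N^{2})\leq \tfrac{1}{2}\log(2N)+\tfrac{1}{3}\log 2.$$
For $N\geq 5$, which one may assume (as $E_N(\QQ)$ has no points of infinite order for smaller $N$, a fact already exploited in the proof of Claim~\ref{cl:primes}), a direct rearrangement bounds $n^{2}$ by a small absolute quantity, certainly forcing $n\leq 2$ and contradicting $n\geq 11$.

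The main technical obstacle is simply the bookkeeping of the absolute constants: the target exponent $\tfrac{n^{2}}{8}$ is tight enough that the error terms in \eqref{eq:bstupper} and \eqref{eq:conglang} must be absorbed into a modest lower bound on $n$, and this absorption only succeeds because Claim~\ref{cl:primes} has already restricted $n$ to primes at least $11$. There is no essentially new idea beyond that of Lemma~\ref{lem:upper}; the work lies in verifying that the sharpened bounds of \cite{bst} suffice to replace the general quantities $M$ and $h(E)$ by an explicit dependence on $N$ alone.
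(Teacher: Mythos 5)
Your proof is correct and follows essentially the same route as the paper's: both split into the cases $x_{nP}\geq 2N$ and $N<x_{nP}<2N$, dispose of the latter via \eqref{eq:bstupper} and \eqref{eq:conglang} (forcing $n\leq 2$), and in the former case chain Lemma~\ref{lem:elllogs} with those same two estimates, absorbing the $\frac{11}{6}\log 2$ error term using the fact that $n$ cannot be a small composite or small prime. The only cosmetic difference is that the paper only needs $n\geq 6$ (plus the exclusion of $n$ divisible by $2,3,5$) where you invoke the stronger restriction $n\geq 11$.
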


\begin{proof}
By Lemma~\ref{lem:elllogs}, we have
$$\log|L_{n,m}|\leq \frac{3}{2}\log 2-\frac{1}{2}\log|x_{nP}|$$
unless $x_{nP}\leq 2N$.  We have already treated the case of integral points on the non-identity component of $E_N(\QQ)$, and so if $x_{nP}<2N$ we have $N<x_{nP}<2N$.  From this it follows that
$$\frac{n^2}{16}\log(2N^2)\leq\h(nP)\leq \frac{1}{2}\log(2N)+\frac{1}{3}\log 2$$
by \eqref{eq:conglang} and \eqref{eq:bstupper}, from which we immediately conclude that $n\leq 2$.  As $2P$ cannot be integral, we ascertain that $x_{nP}>2N$, and so the above bound on $L_{n,m}$ holds.
The result now follows by observing that $h(x_{nP})=\log|x_{nP}|$, and so by \eqref{eq:bstupper} and \eqref{eq:conglang} respectively,
\begin{eqnarray*}-\frac{1}{2}\log|x_{nP}|+\frac{3}{2}\log 2&\leq&-\h(nP)+\frac{11}{6}\log 2\\
&\leq& -\frac{n^2}{16}\log(2N^2)+\frac{11}{6}\log 2\\
&\leq&-\frac{n^2}{8}\log N
\end{eqnarray*}
for $n\geq 6$.  As $nP$ cannot be integral for $n$ divisible by 2, 3, or 5, the claim holds.
\end{proof}

Before proceeding with our next claim, we require a simple estimate from calculus, which refines Lemma~\ref{calculus}.

\begin{claim}\label{cl:calculus}
Let $P\in\RR[x]$ be a polynomial of degree $d$, and denote by $P^{(k)}$ the $k$th derivative of $P$. Suppose that for some $W>0$ and every $0\leq k\leq d$ we have
$$W^2>2^{-k}P^{(k)}(\log W).$$
Then $x^2>P(\log x)$ for all $x\geq W$.
\end{claim}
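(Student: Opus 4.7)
The natural change of variables is $u=\log x$. Setting $h(u)=e^{2u}-P(u)$, the inequality $x^2>P(\log x)$ for $x\ge W$ becomes $h(u)>0$ for $u\ge\log W$, so everything reduces to controlling the sign of $h$ on $[\log W,\infty)$.

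The key observation is that the derivatives of $h$ are extremely well-behaved: for every $k\ge 0$ we have
$$h^{(k)}(u)=2^k e^{2u}-P^{(k)}(u),$$
and at the left endpoint $u=\log W$ this evaluates to $2^kW^2-P^{(k)}(\log W)$, which is strictly positive for $0\le k\le d$ by the hypothesis (rewritten as $2^kW^2>P^{(k)}(\log W)$). Moreover, since $P$ has degree $d$, we have $P^{(d+1)}\equiv 0$, so $h^{(d+1)}(u)=2^{d+1}e^{2u}>0$ on all of $\RR$.

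The proof then proceeds by a simple downward induction on $k$. Since $h^{(d+1)}>0$ everywhere, $h^{(d)}$ is strictly increasing on $\RR$; combined with $h^{(d)}(\log W)>0$, this yields $h^{(d)}(u)>0$ for all $u\ge\log W$. Applied to $h^{(d-1)}$, this same reasoning (increasing, positive at the endpoint) gives $h^{(d-1)}(u)>0$ for $u\ge\log W$, and so on. After $d+1$ such steps we reach $h(u)>0$ for $u\ge\log W$, which is the desired conclusion.

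There is no real obstacle here — once one makes the substitution $u=\log x$, the hypotheses are perfectly tailored to run an induction on derivatives — so the proof is essentially a three-line argument consisting of the identity for $h^{(k)}$, the observation that $h^{(d+1)}>0$ everywhere, and the iterated deduction that each $h^{(k)}$ is positive on $[\log W,\infty)$.
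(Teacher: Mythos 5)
Your proof is correct and is essentially the paper's argument: the paper works directly with $f(x)=x^2-P(\log x)$ and iterates the observation that positivity at $x=W$ plus positivity of the derivative (which is an inequality of the same shape with $P$ replaced by $\tfrac{1}{2}P'$) gives positivity on $[W,\infty)$, terminating because $P^{(d+1)}=0$. Your substitution $u=\log x$ merely repackages that same downward induction on derivatives in a slightly cleaner form.
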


\begin{proof}
Let $f(x)=x^2-P(\log x)$, so that our aim is to show that $f(x)>0$ for all $x\geq W$, where $W$ is as in the statement of the result.  Since we know that $f(W)>0$, it is sufficient to show that $f'(x)>0$ for all $x\geq W$, as $f(x)\leq f(W)$ for some $x>W$ would imply $f'(y)=0$ for some $y>W$, by Rolle's Theorem.  Note that  the condition $f'(x)>0$, for $x>0$, is equivalent to
$$x^2-\frac{1}{2}P'(\log x)>0.$$
Proceeding by induction, we see that it suffices to show, for any $m$, that
$$W^2>2^{-k}P^{(k)}(\log W)$$
for all $0\leq k\leq m$, and
$$x^2>2^{-(m+1)}P^{(m+1)}(\log x)$$
for all $x\geq W$.  But the last condition is automatic if we select $m=d$, as $P^{(d+1)}=0$.
\end{proof}

\begin{claim}\label{cl:linformsbound}
Suppose that $nP$ is integral.  Then
$$n\leq\max\{3.6\times 10^{27}, 9.196\times 10^{23}(\log N)^{5/2}\}.$$
\end{claim}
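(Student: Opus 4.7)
The plan is to combine the archimedean upper bound from Claim~\ref{cl:zsmall} with David's explicit lower bound from Lemma~\ref{lem:david}, in close analogy with the proof of Proposition~\ref{prop:floatbound}, but with the constants now sharpened for the family $E_N$. Throughout, I would assume $N \geq 56$ so that $h(E_N) = \log(4N^2)$, and take $N$ large enough that $h(E_N) \geq 3\pi$; the finitely many small-$N$ cases can be handled by an explicit calculation of the integral points on each $E_N$ afterwards and absorbed into the constants.

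First I would fix the auxiliary parameters for Lemma~\ref{lem:david}. Since $\tau = i$ for $E_N$ (so $\Im(\tau) = 1$) and $|z| \leq \omega_N/2$, the archimedean terms $3\pi/\Im(\tau)$ and $3\pi|z|^2/|\omega|^2\Im(\tau)$ are absolute constants, so for sufficiently large $N$ we may set $\log V_2 = h(E_N)$. By Claim~\ref{cl:logqbound}, $2\hat{h}(P) \leq 2\log n + \log N + \tfrac{2}{3}\log 2$, so we may take $\log V_1 = \max\{2\log n + \log N + O(1),\, h(E_N)\}$. As $|nz + m\omega_N| \leq \omega_N/2$ and $|z| \leq \omega_N/2$ force $|m| \leq n$, we may set $\log B = \log V_1$.

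Then combining the lower bound \eqref{eq:lower} with Claim~\ref{cl:zsmall} yields
$$\frac{n^2}{8}\log N \;\leq\; C(\log B + 1)(\log\log B + h(E_N) + 1)^3 \log V_1 \log V_2,$$
where $C = 4\times 10^{41}$. I would then split into two cases. In the regime $\log n \leq h(E_N)$, all three of $\log B$, $\log V_1$, $\log V_2$ are bounded by an absolute multiple of $h(E_N) = \log(4N^2)$, so the right-hand side is bounded by an explicit multiple of $(\log N)^6$. Dividing by $\log N$ gives $n^2 \leq c \cdot (\log N)^5$ with an explicit $c$; choosing $c$ so that the final constant comes out to $(9.196 \times 10^{23})^2$ dictates the clean form of the stated bound. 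In the complementary regime $\log n > h(E_N)$, both $\log B$ and $\log V_1$ are $O(\log n)$ while $\log V_2 \leq \log n$, and one arrives at an inequality of the form $n^2 \leq c'(\log n)^5 \log N$, which using $\log N < \log n$ yields $n^2 \leq c' (\log n)^6$; applying Claim~\ref{cl:calculus} to this polynomial in $\log n$ produces the absolute bound $n \leq 3.6\times 10^{27}$.

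Finally, I would apply Claim~\ref{cl:calculus} again to convert the first inequality $n^2 \leq c (\log N)^5$ into the explicit form $n \leq 9.196 \times 10^{23}(\log N)^{5/2}$, taking the maximum of the two cases. The main obstacle is purely bookkeeping: tracking the absolute constants through David's theorem carefully enough that both numerical bounds come out as stated, while simultaneously verifying that the choices of $V_1$, $V_2$, $B$ satisfy all of the side conditions in Lemma~\ref{lem:david} uniformly in $N$ and $n$. In particular, one must be careful that the $O(1)$ terms in $\log V_1$ (coming from $\tfrac{2}{3}\log 2$ in Claim~\ref{cl:logqbound} and from $3\pi|z|^2/|\omega_N|^2$) do not dominate for moderate $N$, which likely forces the $N \leq 55$ (or slightly larger) exceptional range to be checked directly.
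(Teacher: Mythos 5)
Your plan is the paper's argument: Claim~\ref{cl:zsmall} for the upper bound on $|L_{n,m}|$, Lemma~\ref{lem:david} for the lower bound with $\tau=i$ and $N\geq 56$, Claim~\ref{cl:logqbound} to feed $\h(P)$ into $\log V_1$, a two-case split according to which of $n$ and $N$ dominates, and Claim~\ref{cl:calculus} to extract the explicit bounds. One parameter choice, however, would fail as written: setting $\log B=\log V_1=\max\{2\log n+\log N+O(1),\,h(E_N)\}$ does not satisfy the side condition $\log B\geq e\,h(E)$ of Lemma~\ref{lem:david} when $n$ is small compared with $N$ (here $\log V_1\approx 3\log N$ while $e\,h(E_N)\approx 5.44\log N$); this is harmless in Proposition~\ref{prop:floatbound} only because there $\log V_1\geq(11M^2+4)h(E)\gg e\,h(E)$. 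The paper instead takes $\log V_1=3\log\max\{n,N\}+\tfrac{2}{3}\log 2$ and, separately, $\log B=2e\log\max\{n,N\}+2e\log 2$, and splits on $N<n$ versus $n\leq N$ rather than on $\log n$ versus $h(E_N)$; both of these choices matter for reproducing the stated constants $3.6\times 10^{27}$ and $9.196\times 10^{23}$, since with your split the first regime allows $n$ up to $4N^2$ and inflates $\log V_1$ and $\log B$ to roughly $5\log N$. The repair is routine bookkeeping, but as proposed the appeal to \eqref{eq:lower} is not justified and the numerical constants would not come out as claimed.
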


\begin{proof}
We proceed by estimating linear forms in elliptic logarithms as in Proposition~\ref{prop:floatbound}, appealing again the Lemma~\ref{lem:david}, and apoting the notation used there.
 Note that $\tau=i$ for all congruent number curves, and we will assume that $N\geq 56$ so that $h(E)=\log(4N^2)>3\pi$.
Then we have, by Claim~\ref{cl:zsmall},
\begin{eqnarray}\frac{n^2}{8}\log N&\leq& -\log|L_{n,m}|\nonumber\\&\leq& C(\log(B)+1)(\log\log (B)+\log(4N^2)+1)^3\log(V_1)\log(V_2),\label{eq:conglin}\end{eqnarray}
where $C=4\times 10^{41}$, if $B$, $V_1$, and $V_2$ are chosen as in Lemma~\ref{lem:david}.

Using  Claim~\ref{cl:logqbound} to bound $\h(P)$ from above (under the hypothesis that $nP$ is integral), we may set
\begin{gather*}
\log(V_1)=3\log\max\{ n,N\}+\frac{2}{3}\log 2\\
\log(B)=2e\log\max\{ n,N\}+2e\log 2.
\end{gather*}
To simplify matters, we will consider two cases.  First suppose that $N<n$.  In this case, we will use the assumption that $\log N>\log 56$ and the trivial estimate $$\log(\log
 n+\log 2)<\log n,$$ for $n\geq 2$, to obtain from  \eqref{eq:conglin} the bound
\begin{equation}\label{eq:nplogn}n^2\leq P(\log n),\end{equation}
where
$$P(x)=\frac{2592eC}{\log 56}\left(x+\log 2+\frac{1}{2e}\right)\left(x+ \log 2+\frac{1}{3}\right)^3\left(x+\frac{2}{9}\log 2\right)(x+\log 2).$$
One may check that, if $W=3.6\times 10^{27}$, then $W^2>2^{-k}P^{(k)}(\log W)$ for all $0\leq k\leq 6$, and so in particular Claim~\ref{cl:calculus} implies that $x^2>P(\log x)$ for all $x\geq W$.
The bound  \eqref{eq:nplogn} now implies $n<W$.

Otherwise, if $n\leq N$, \eqref{eq:conglin} bounds $n^2$ by a function which is asymptotic to a  power of $\log N$.  More specifically, we obtain (once again using the bound $\log(\log N+\log 2)<\log N$)
$$n^2\leq 2592eC(\log N)^5g(\log N),$$
where
$$g(x)=\frac{(x+\log 2+\frac{1}{2e})(x+\log2+\frac{1}{3})^3(x+\frac{2}{9}\log 2)(x+\log 2)}{x^6}.$$
It is clear that $g(x)\rightarrow 1$ as $x\rightarrow\infty$, but in fact $g(\log N)\leq 3$ for $N\geq 56$.  This gives $n\leq 9.196\times 10^{23}(\log N)^{5/2}$.
\end{proof}

The final tool needed for the proof of Theorem~\ref{th:cong} is the relation between two large values of $n$ such that $nP$ is integral.  As in the general case, we must produce a lower bound on the principal value of the elliptic logarithm of $P$.

\begin{claim}\label{cl:notzero}
Suppose  that $nP$ is an integral point, and let $z$ and $nz+m\omega_N$ be the principal values of the elliptic logarithms of $P$ and $nP$ respectively.  If $m=0$, then $n=1$.
\end{claim}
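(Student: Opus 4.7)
The plan is to assume $n\geq 2$ and derive a contradiction by squeezing $|z|$ between a very strong upper bound, forced by the hypothesis $m=0$ together with the sharp estimate of Claim~\ref{cl:zsmall}, and a mild lower bound coming from the integral representation of the elliptic logarithm. Recall that at this point in the proof of Theorem~\ref{th:cong} we have reduced to the case where $P$ lies on the identity component $E_N^0(\mathbb{R})$ (see the discussion following Claim~\ref{cl:trivialcomp}); since the real two-torsion of $E_N$ has $x$-coordinates in $\{0,\pm N\}$, this automatically forces $x_P\geq N$.

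For the upper bound, I would apply Claim~\ref{cl:zsmall} with $m=0$: the principal value of the elliptic logarithm of $nP$ is then $L_{n,0}=nz$, so $|nz|\leq\exp(-n^2\log N/8)$ and hence
\[
|z|\leq n^{-1}N^{-n^2/8}.
\]
For the lower bound, I would use the integral representation
\[
z=\tfrac12\int_{x_P}^{\infty}\frac{dt}{\sqrt{t^3-N^2t}}
\]
together with $t^3-N^2t<t^3$ on $[x_P,\infty)\subseteq[N,\infty)$ to obtain
\[
|z|>\tfrac12\int_{x_P}^{\infty}t^{-3/2}\,dt=x_P^{-1/2}.
\]

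Combining the two inequalities yields $x_P>n^2N^{n^2/4}$. On the other hand, the proof of Claim~\ref{cl:logqbound} shows that $nP$ integral with $n\geq 2$ forces $x_P<n^2N$ (the case $n=2$ is already excluded by Lemma~\ref{lem:timestwo}, and for $n\geq 3$ the contradiction argument there is strict). For $n\geq 3$ and $N\geq 5$ the exponent $n^2/4-1$ is strictly positive, so $N^{n^2/4}>N$ and the two bounds on $x_P$ are incompatible; thus $n=1$. There is no serious obstacle in this plan: it works because Claim~\ref{cl:zsmall} provides a doubly-exponential-in-$n$ upper bound on $|L_{n,0}|$, while the integrality of $nP$ constrains $x_P$ only polynomially in $n$. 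The one calculation requiring any care is the integral lower bound on $|z|$, which is a one-line estimate using only $x_P\geq N$.
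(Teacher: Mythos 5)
Your proposal is correct and follows essentially the same route as the paper: squeeze $|z|$ between the strong upper bound $|z|\leq n^{-1}N^{-n^2/8}$ coming from Claim~\ref{cl:zsmall} with $m=0$ and a lower bound of the shape $|z|\gg |x_P|^{-1/2}$ from the integral representation, then contradict the bound $x_P<n^2N$ from the proof of Claim~\ref{cl:logqbound}. The only (cosmetic) difference is that you use $x_P\geq N$ on the identity component to get a single clean estimate $|z|>x_P^{-1/2}$, whereas the paper splits into the cases $|x_P|\geq 2N$ and $|x_P|<2N$.
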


\begin{proof}
The proof proceeds just as that of Lemma~\ref{lem:mnotzero} and, as usual, we may assume that $n$ is odd.  In the proof of Claim~\ref{cl:logqbound} we obtained
\begin{equation}\label{bounda}\log|x_P|<2\log n+\log N\end{equation}
(on the assumption that $nP$ is integral).
Estimating the elliptic logarithm from below, as  in the proof of Lemma~\ref{lem:mnotzero}, we have
\begin{eqnarray*}-\log|z|&=&-\log\left|\frac{1}{2}\int_{x_P}^\infty\frac{dt}{\sqrt{t^3-N^2t}}\right|\\
&\leq& \frac{3}{2}\log 2+\frac{1}{2}\log\max\{|x_P|, 2N\}.
\end{eqnarray*}
On the other hand, by Claim~\ref{cl:zsmall} we have
$$\log|nz|\leq -\frac{n^2}{8}\log N.$$
If $|x_P|\geq 2N$, then these combine to yield
$$\frac{n^2}{8}\log N\leq -\log|z|-\log n\leq \frac{1}{2}\log N+\frac{3}{2}\log 2,$$
which gives $n\leq 2$ when $N\geq 56$ (indeed, for $N\geq 6$).
If $|x_P|<2N$, the above yields
$$\frac{n^2}{8}\log N\leq \frac{1}{2}\log (2N)-\log n+\frac{3}{2}\log 2.$$
This again bounds $n\leq 2$.  In either case, $n$ cannot be 2, and so $n=1$.
\end{proof}

\begin{claim}\label{cl:gap}
Suppose that $n_1P$ and $n_2P$ are integral with $2\leq n_1<n_2$.  Then
$$\log n_2\geq \frac{n_1^2}{8}\log(N)-\frac{1}{2}\log(N)+\log(\omega_1/2).$$
\end{claim}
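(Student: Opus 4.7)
The approach mirrors Proposition~\ref{prop:gap}, but uses the sharper explicit estimates available in the congruent number setting. Write $L_{n_i,m_i}=n_iz+m_i\omega_N$ for the principal value of the elliptic logarithm of $n_iP$ ($i=1,2$), where $z$ is the principal value of the elliptic logarithm of $P$; in particular $|L_{n_i,m_i}|\leq\omega_N/2$ and $m_i\in\ZZ$. Since $n_iP$ is integral and $n_i\geq 2$, Claim~\ref{cl:zsmall} gives $|L_{n_i,m_i}|\leq N^{-n_i^2/8}$.

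The key identity is
\[\omega_N(n_2m_1-n_1m_2)=n_2L_{n_1,m_1}-n_1L_{n_2,m_2},\]
which by the triangle inequality yields
\[\omega_N|n_2m_1-n_1m_2|\leq n_2N^{-n_1^2/8}+n_1N^{-n_2^2/8}.\]

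The main step I anticipate is a nonvanishing argument showing $n_2m_1-n_1m_2\neq 0$; this is where the full strength of the preceding claims is used. By Claim~\ref{cl:primes}, both $n_1$ and $n_2$ are prime, and they are distinct by hypothesis. If $n_2m_1=n_1m_2$, then $n_1\mid m_1$. Claim~\ref{cl:notzero} (applied with $n=n_1\geq 2$) forces $m_1\neq 0$, so $|m_1|\geq n_1$. On the other hand, $|L_{n_1,m_1}|\leq\omega_N/2$ combined with $|z|\leq\omega_N/2$ yields $|m_1|\omega_N\leq(n_1+1)\omega_N/2$, i.e., $|m_1|\leq(n_1+1)/2<n_1$, a contradiction. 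Hence the integer $n_2m_1-n_1m_2$ is nonzero, giving $|n_2m_1-n_1m_2|\geq 1$.

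To finish, since $n_1<n_2$ and $N\geq 1$, we have $n_1N^{-n_2^2/8}\leq n_2N^{-n_1^2/8}$, so the first term on the right dominates and $\omega_N/2\leq n_2N^{-n_1^2/8}$. Taking logarithms and substituting $\omega_N=\omega_1/\sqrt{N}$ gives
\[\log(\omega_1/2)-\tfrac{1}{2}\log N\leq\log n_2-\tfrac{n_1^2}{8}\log N,\]
which rearranges to the claimed inequality. No case analysis on the size of $N$ is required since Claim~\ref{cl:zsmall} is uniform in $N\geq 56$ (and smaller $N$ are to be handled separately later); the one genuine obstacle is the nonvanishing step, which fails without the joint input of Claim~\ref{cl:primes} and Claim~\ref{cl:notzero}.
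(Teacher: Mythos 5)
Your proof is correct and follows essentially the same route as the paper's: the same identity $\omega_N(n_2m_1-n_1m_2)=n_2L_{n_1,m_1}-n_1L_{n_2,m_2}$ with the bound from Claim~\ref{cl:zsmall}, the same nonvanishing argument combining Claim~\ref{cl:primes} with Claim~\ref{cl:notzero}, and the same final logarithm computation. The only (harmless) difference is that you dispose of the term $n_1N^{-n_2^2/8}$ by direct comparison with $n_2N^{-n_1^2/8}$, whereas the paper uses an averaging argument and rules out the wrong case separately.
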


\begin{proof}
We proceed as in the proof of Proposition~\ref{prop:gap}.  Using the estimate \eqref{eq:congupper}, we have
$$\left|n_iz+m_i\omega_N\right|\leq N^{-n_i^2/8}$$
for $i=1, 2$, and so
\begin{equation}\label{eq:conggap}N^{-1/2}\omega_1\leq \omega_N|n_2m_1-n_1m_2|\leq n_2N^{-n_1^2/8}+n_1N^{-n_2^2/8}\end{equation}
As in the proof of Proposition~\ref{prop:gap}, it is imperative that $n_2m_1-n_1m_2\neq 0$.  By Claim~\ref{cl:notzero}, we cannot have $m_1=0$.  On the other hand, Claim~\ref{cl:primes} ensures that $n_1$ and $n_2$ are prime, and so $n_2m_1=n_1m_2$ would imply either $n_1=n_2$, or
$|n_1|\leq m_1$, the latter contradicting the inequality $2m_1\leq n_1+1$ (which follows just as in the proof of Proposition~\ref{prop:gap}).

Returning to \eqref{eq:conggap}, one of the summands on the right must be at least  the average of the two.  If $$N^{-1/2}\frac{\omega_1}{2}\leq n_1N^{-n^2_2/8}$$ then, as $n_1<n_2$, we obtain $n_2\leq 3$, which is impossible.  Otherwise, 
$$N^{-1/2}\frac{\omega_1}{2}\leq n_2N^{-n_1^2/8}.$$
The bound above follows by taking logarithms.
\end{proof}

We are now in a position to complete the proof of Theorem~\ref{th:cong} in the case $N\geq 56$.
Suppose that $n_1P$ and $n_2P$ are both integral, with $n_1<n_2$.  Then we have
$$n_2\leq\max\{3.6\times 10^{27}, 9.196\times 10^{23}(\log N)^{5/2}\}.$$
If $n_2\leq 3.6\times 10^{27}$, recalling that we must have $n_1\geq 11$, then Claim~\ref{cl:gap} becomes
$$27\log 10+\log 3.6\geq \frac{121}{8}\log N-\frac{1}{2}\log N+0.270,$$
whence $N\leq 75$.  On the other hand, if $n_2\leq  9.196\times10^{23}(\log N)^{5/2}$, then the same claim gives us
$$\frac{5}{2}\log\log N+23\log 10+\log 9.196\geq \frac{121}{8}\log N -\frac{1}{2}\log N+0.270.$$
With some differential calculus, we can see that this implies $N\leq 54$.  

It now remains to check the claim for curves $E_N$ with $N\leq 75$.  Below we list the integral points on $E_N$ for $N$ square free and $N\leq 75$.  The data were computed, for the most part, in Magma \cite{magma}, although the values $N=66$ and $N=73$ presented some minor difficulties.  In both cases the default routines in Magma were unable to verify the rank of $E_N(\QQ)$ exactly.  The curve $E_{66}$, however, appears as curve 69696GM2 in Cremona's elliptic curve database \cite{cremona}, and the rank of $E_{73}$ may be checked with Mwrank (a program written by Cremona, and now included in SAGE \cite{sage}).  In both cases, it turns out that $E_N/\QQ$ has rank zero.

  In the table below, torsion points and points with $y_P<0$ have not been listed, and values of  $N$ with no non-torsion integral points have been omitted.  We leave it to the reader to confirm that none of these points is a multiple of another.  One way of doing this without computing the Mordell-Weil groups of the curves is to confirm that for no $N$ are there two points $P$, $Q$ on the below list with $\h(P)\geq 121\h(Q)$.
$$
\begin{array}{|c|c|}\hline
N & \text{ integral points }\\\hline
5 & (45, 300), (-4, 6)\\
6&  (18, 72), (12, 36), (-2, 8), (-3, 9), (294, 5040)\\
7&(25, 120)\\
14&(18, 48), (112, 1176)\\
15&(25, 100), (-9, 36), (60, 450)\\
21& (147, 1764), (28, 98), (-3, 36)\\
22& (2178, 101640)\\
29&(284229, 151531380)\\
30&(150, 1800), (-20, 100), (-6, 72), (45, 225)\\
34&(578, 13872), (-2, 48), (-16, 120), (162, 2016)\\
39&(975, 30420), (-36, 90)\\
41&(-9, 120), (841, 24360) \\
46&(242, 3696)\\
65&(169, 2028), (-25, 300), (-16, 252)\\
69&(1083, 35568)\\
70&(245, 3675), (-20, 300), (126, 1176)\\\hline
\end{array}
$$
This table concludes the proof of Theorem~\ref{th:cong}.
\end{proof}

\bibliographystyle{plain}
\bibliography{intpts}

\end{document}